\pdfoutput=1
\documentclass[a4paper,oneside]{amsart}
\usepackage{amsmath,amssymb,amsthm}
\usepackage[english]{babel}
\usepackage[T1]{fontenc}
\usepackage{newtxtext,newtxmath}
\usepackage[utf8]{inputenc}
\usepackage[babel=true]{microtype}
\usepackage[autostyle=true]{csquotes}
\usepackage{thmtools,thm-restate}
\usepackage[pdfusetitle]{hyperref}
\hypersetup{
  colorlinks=true,
  allcolors=blue,
}
\usepackage[citestyle=numeric,bibstyle=numeric,backend=biber,url=false,doi=true,isbn=false,giveninits=true]{biblatex}

\AtEveryBibitem{\clearfield{month}}
\AtEveryBibitem{\clearfield{day}}
\AtEveryBibitem{\clearfield{language}}
\bibliography{literature.bib}

 \usepackage{tikz-cd}
\usepackage[shortlabels]{enumitem}
\newlist{myenumi}{enumerate}{1}
\setlist[myenumi,1]{label=\upshape(\roman*)}
\newlist{myenuma}{enumerate}{1}
\setlist[myenuma,1]{label=\upshape(\alph*)}
\declaretheorem[name=Theorem, numberwithin=section]{thm}
\declaretheorem[name=Theorem, numbered=no]{thm*}
\declaretheorem[name=Lemma,numberlike=thm]{lem}
\declaretheorem[name=Corollary,numberlike=thm]{cor}
\declaretheorem[name=Proposition,numberlike=thm]{prop}

\declaretheorem[name=Example, numberlike=thm, style=remark]{ex}
\declaretheorem[name=Remark, numberlike=thm, style=remark]{rem}
\numberwithin{equation}{section}
\allowdisplaybreaks[1]
\usepackage[noabbrev]{cleveref} 
\crefname{figure}{Figure}{Figures}
\crefname{table}{Table}{Tables}
\crefname{thm}{Theorem}{Theorems}
\crefname{lem}{Lemma}{Lemmas}
\crefname{defi}{Definition}{Definitions}
\crefname{cor}{Corollary}{Corollaries}
\crefname{prop}{Proposition}{Propositions}
\crefname{ex}{Example}{Examples}
\crefname{rem}{Remark}{Remarks}
\crefname{section}{Section}{Sections}
\crefname{chapter}{Chapter}{Chapters}
\crefname{appendix}{Appendix}{Appendices}
\crefdefaultlabelformat{\textup{#2#1#3}}
\creflabelformat{enumi}{\textup{(#2#1#3)}}

\usepackage{xparse}

\NewDocumentCommand \betaPSC {} {
  \beta^{\mathrm{(psc)}}
}

\NewDocumentCommand \betatC {} {
  \beta^{(\mathrm{t})}
}

\newcommand{\betaaC}{\beta^{(\mathrm{a})}}
\newcommand{\betat}{\beta^{(\mathrm{t})}}

\newcommand{\parensup}[1]{\textup{(}#1\textup{)}}

\newcommand{\Riem}{\mathcal{R}}

\newcommand{\fin}{\mathrm{fin}}
\newcommand{\psc}{\mathrm{psc}}
\newcommand{\Fin}{\mathrm{F}}
\newcommand{\tr}{\operatorname{tr}}

\NewDocumentCommand \StolzRel {m m} {
   \mathrm{R}_{#1}^{\mathrm{spin}}
   \left(
      #2
   \right)
}

\NewDocumentCommand \StolzPos {m m} {
   \mathrm{P}_{#1}^{\mathrm{spin}}
   \left(
      #2
   \right)
}

\NewDocumentCommand \SpinBordism {m m} {
   \Omega_{#1}^{\mathrm{spin}}
   \left(
      #2
   \right)
}

\newcommand{\KOtoKU}{\operatorname{\mathbf{c}}}
\newcommand{\cxfy}{\operatorname{c}}
\newcommand{\rlfy}{\operatorname{r}}

\newcommand{\alphaAPS}{\alpha}

\newcommand{\numbers}[1]{\mathbb{#1}}
\newcommand{\C}{\numbers{C}}
\newcommand{\Z}{\numbers{Z}}
\newcommand{\N}{\numbers{N}}
\newcommand{\Q}{\numbers{Q}}
\newcommand{\R}{\numbers{R}}

\newcommand{\iso}{\cong}

\newcommand{\iu}{\mathrm{i}}
\newcommand{\eu}{\mathrm{e}}
\newcommand{\B}{\mathrm{B}}
\newcommand{\E}{\mathrm{E}}

\newcommand{\Eub}{\underline{\mathrm{E}}}

\newcommand{\sphere}{\mathrm{S}}

\newcommand{\ch}{\operatorname{ch}}
\newcommand{\ph}{\operatorname{ph}}
\newcommand{\chub}{\underline{\operatorname{ch}}}
\newcommand{\phub}{\underline{\ph}}

\newcommand{\tens}{\otimes}

\newcommand{\HZ}{\mathrm{H}}
\newcommand{\KO}{\mathrm{KO}}
\newcommand{\KU}{\mathrm{K}}

\newcommand{\RU}{\mathrm{R}}
\newcommand{\RO}{\mathrm{RO}}

\newcommand{\pt}{\mathrm{pt}}

\newcommand{\Cstar}{\mathrm{C}^*}

\newcommand{\SG}{\mathrm{S}}
\newcommand{\CstarRed}{\Cstar_{\mathrm{r}}}

\title{Positive scalar curvature and low-degree group homology}
\author{No\'{e} B\'{a}rcenas }
\email{barcenas@matmor.unam.mx}
\address{Centro de Ciencias Matem\'aticas. UNAM \\Ap.Postal 61-3 Xangari. Morelia, Michoac\'an MEXICO 58089}

\author{Rudolf Zeidler}
\email{math@rzeidler.eu}
\address{Mathematisches  Institut, Westfälische Wilhelms-Universität Münster, Einsteinstr.\ 62, 48149 Münster, Germany}
\begin{document}
   \begin{abstract}
     Let $\Gamma$ be a discrete group.
     Assuming rational injectivity of the Baum--Connes assembly map, we provide new lower bounds on the rank of the positive scalar curvature bordism group and the relative group in Stolz' positive scalar curvature sequence for $\mathrm{B} \Gamma$.
     The lower bounds are formulated in terms of the part of degree up to $2$ in the group homology of $\Gamma$ with coefficients in the $\mathbb{C}\Gamma$-module generated by finite order elements.
     Our results use and extend work of Botvinnik and Gilkey which treated the case of finite groups.
     Further crucial ingredients are a real counterpart to the delocalized equivariant Chern character and Matthey's work on explicitly inverting this Chern character in low homological degrees.
   \end{abstract}
 \maketitle
\section{Introduction}

There exists a natural comparison mapping between the positive scalar curvature (psc) sequence of Stolz \parensup{top row} to the analytic surgery sequence of Higson and Roe \parensup{bottom row}:

\begin{equation}
   \begin{tikzcd}[column sep=small]
      \SpinBordism{n}{\B \Gamma} \rar \dar{\beta} &
      \StolzRel{n}{\B \Gamma} \rar{\partial} \dar{\alpha} &
      \StolzPos{n-1}{\B \Gamma} \rar \dar{\rho} &
      \SpinBordism{n-1}{\B \Gamma} \rar \dar{\beta} &
      \StolzRel{n-1}{\B \Gamma} \dar{\alpha}
      \\
      \KO_{n}(\B \Gamma) \rar{\nu} &
      \KO_{n}(\CstarRed \Gamma) \rar{\partial}
        & \SG^\R_{n-1}(\Gamma) \rar
        & \KO_{n-1}(\B \Gamma) \rar{\nu}
        & \KO_{n-1}(\CstarRed \Gamma)
   \end{tikzcd}\label{eq:mappingPSCtoAnalysis}
\end{equation}

This diagram was first established by \Citeauthor{PS14Rho}~\cite[Theorem~1.28]{PS14Rho} for complex K-theory and $n$ even.
It was extended by \citeauthor{XY14Positive}~\cite[Theorem~B]{XY14Positive} and by the second-named author~\cite[Theorem~3.1.13]{Z16PhD} to cover all dimensions and the real case.
See also \citeauthor{Zen17MappingSurgery}~\cite[Remark 6.2]{Zen17MappingSurgery}.

We briefly explain the constituents in the diagram above.
Start with Stolz' psc sequence.
The group $\SpinBordism{n}{\B \Gamma}$ is the singular \emph{spin bordism group} of the classifying space $\B \Gamma$. That is, it consists of bordism classes of pairs $(M, \phi)$, where $M$ is a closed spin manifold of dimension $n$ and $\phi \colon M \to \B \Gamma$ a continuous map.
The \emph{psc spin bordism group} $\StolzPos{n}{\B \Gamma}$ consists of bordism classes of $(M, \phi, g)$, where $(M, \phi)$ is as before and $g \in \Riem^+(M)$ is a metric of psc.
Here we require bordisms to have metrics of positive scalar curvature with product structure near the boundary.
\emph{Stolz' relative group} $\StolzRel{n+1}{\B \Gamma}$ consists of bordisms classes of triples $(W,\phi, g)$, where $W$ is a compact spin manifold of dimension $(n+1)$, $\phi \colon W \to \B \Gamma$ a continuous map, and $g \in \Riem^+(\partial W)$ a metric of psc on the boundary.
The horizontal arrows in Stolz' sequence are the evident forgetful maps.

The (real version of the) \emph{analytic surgery sequence} of Higson and Roe consists of the \emph{real $\KU$-homology} of $\B \Gamma$, the topological \emph{$\KU$-theory of the reduced group $\Cstar$-algebra} of $\Gamma$, and the \emph{analytic structure group} of $\Gamma$.
We denote the latter by $\SG^\R_\ast(\Gamma)$.
It is defined in such a way that it fits into a long extact sequence together with the real \emph{Novikov assembly map} $\nu \colon \KO_\ast(\B \Gamma) \to \KO_\ast(\CstarRed \Gamma)$.
We will also use their complex counterparts which we denote by $\KU_\ast(\B \Gamma)$, $\KU_\ast(\CstarRed \Gamma)$, and $\SG^\C_\ast(\Gamma)$.

The groups $\StolzPos{n-1}{\B \Gamma}$ and $\StolzRel{n}{\B \Gamma}$ classify psc metrics up to bordism and concordance, respectively, on spin manifolds with fundamental group $\Gamma$.
For the latter see~\cite[Theorem~5.1]{RS01PscSurgery}.
Alas, at present there are no tools known that allow a computation of these groups (not even in simple special cases).
However, the comparison~\labelcref{eq:mappingPSCtoAnalysis} allows us  to obtain lower bounds on these groups using the index-theoretic information contained in the sequence of Higson and Roe.
To that end, it is important to know something about the size of the image of the \emph{relative index map} $\alpha \colon \StolzRel{n}{\B \Gamma} \to \KO_{n}(\CstarRed \Gamma)$ and the \emph{higher rho-invariant} $\rho \colon \StolzPos{n-1}{\B \Gamma} \to \SG_{n-1}^\R(\Gamma)$.

The first case where something can be said is the class of finite groups.
Indeed, let $H$ be a finite group.
Let $\RU(H)$ denote its complex representation ring.
Let $\RU_0^q(H)$ be the subgroup generated by those representations $\rho$ of virtual dimension $0$ such that its character $\chi_\rho$ satisfies $\chi_\rho(\gamma^{-1}) = (-1)^q \chi_\rho(\gamma)$ for all $\gamma \in H$.
\Citeauthor{BG95Eta}~\cite[Theorem~2.1]{BG95Eta} showed that the rank of the positive scalar curvature bordism group $\StolzPos{2q + 4k -1}{\B H}$ is bounded below by the rank of $\RU^q_0(H)$, where $k \geq 1$, $q \in \{0,1\}$ with $4k + 2q \geq 6$.
They used relative eta-invariants.
These are numerical invariants that are related to the higher $\rho$-invariant via certain trace maps, see for instance~\cite{HR10Eta}.
In fact, \citeauthor{BG95Eta}'s result implies that both $\rho \colon \StolzPos{n-1}{\B H} \to \SG_{n-1}^\R(H)$ and $\alpha \colon \StolzRel{n}{\B H} \to \KO_{n}(\CstarRed H)$ are \emph{rationally surjective} for $n \geq 6$ (we explain this in \cref{sec:finiteGroups}).
Moreover, recently \citeauthor{WY15FinitePart}~\cite{WY15FinitePart} and \citeauthor{XY17Moduli}~\cite{XY17Moduli} gave lower bounds for a large class of infinite groups based on the number of torsion elements with pairwise different orders.
We also refer to~\citeauthor{PS07Torsion}~\cite{PS07Torsion} for lower bounds on the positive scalar bordism group based on the $L^2$-rho-invariant.

The results mentioned above only yield information for $n$ even.
Using product formulas one can obtain further ad-hoc examples of non-trivial relative indices and $\rho$-invariants by taking certain products, see~\cite[Corollary~6.10]{Z16Positive} and \cite[Corollary~5.24]{Zen17MappingSurgery}.
For instance, taking products with the circle allows to also produce examples for $n$ odd.

In the main results of this paper, we give new systematic lower bounds for all $n \geq 7$ on the image of the relative index and the rho-invariant based on the part of degree up to $2$ of a certain group homology.
The result of \citeauthor{BG95Eta} yields the $0$-dimensional part.
Then the idea ist that degrees $1$ and $2$ can be obtained from this by taking products with circles and oriented surfaces.
We use the \emph{Baum--Connes assembly map} $\mu \colon \KU_\ast^\Gamma(\Eub \Gamma) \to \KU_\ast(\CstarRed \Gamma)$, the \emph{delocalized Chern character} of \citeauthor{BC88Chern}~\cite{BC88Chern}, and --- most centrally --- its explicit partial inverse in degrees up to $2$ due to \citeauthor{matthey:delocChern}~\cite{matthey:delocChern}.

To state our results, we start with some preparations.
Let $\Gamma$ be a discrete group and denote by $\Gamma_\fin$ the set of elements of finite order of $\Gamma$.
Let $\Fin \Gamma$ be the set of all finitely supported functions $\Gamma_\fin \to \C$.
  Letting $\Gamma$ act by conjugation on $\Gamma_\fin$ turns $\Fin \Gamma$ into a $\C \Gamma$-module.
  The delocalized equivariant Chern character yields an isomorphism
  \begin{equation}
    \chub_\Gamma \colon \KU_p^\Gamma(\Eub \Gamma) \tens \C \xrightarrow{\cong} \bigoplus_{k \in \Z} \HZ_{p + 2k}(\Gamma; \Fin \Gamma)\label{eq:delocChern}
  \end{equation}
    It was first introduced by \citeauthor{BC88Chern}~\cite{BC88Chern} but we will instead work with the \enquote{handicrafted Chern character} of \citeauthor{matthey:delocChern}~\cite[Theorem~1.4]{matthey:delocChern}.
    \Citeauthor{matthey:delocChern} also constructed maps $\betatC_p \colon \HZ_p(\Gamma; \Fin \Gamma) \to \KU_p^\Gamma(\Eub \Gamma) \tens \C$ for $p \in \{0,1,2\}$ which are right-inverse to the delocalized Chern character.
    Moreover, he defined explicit maps $\betaaC_p \colon \HZ_p(\Gamma; \Fin \Gamma) \to \KU_p(\CstarRed \Gamma)$ which satisfy $\betaaC_p \circ (\mu \tens \C) = \betatC_p$ for $p \in \{0,1,2\}$.
    They thereby describe the Baum--Connes assembly map explicitly in low homological degrees.

    To use these maps for our purposes, we need to adapt the above to real K-homology.
    To that end, for $q \in \{0,1\}$, let $\Fin^q \Gamma = \{f \in \Fin \Gamma \mid f(\gamma) = (-1)^q f(\gamma^{-1})\ \forall \gamma \in \Gamma_\fin\}$.
    Then $\Fin \Gamma = \Fin^0 \Gamma  \oplus \Fin^1 \Gamma$ as $\C \Gamma$-modules.
    We can now state our main result and its corollaries.
\begin{thm}\label{thm:betaPSC}
  For each $p \in \{0,1,2\}$, $q \in \{0,1\}$ and $k \geq 1$ with $4k + 2q \geq 6$, there exists a linear map
  \begin{equation*}
    \betaPSC_{p,q,k} \colon \HZ_p(\Gamma; \Fin^q \Gamma) \to \StolzRel{p + 2q + 4k}{\B \Gamma} \tens \C,
  \end{equation*}
  such that the following diagram commutes
  \begin{equation*}
    \begin{tikzcd}
      \HZ_p(\Gamma; \Fin^q \Gamma ) \rar[hookrightarrow] \dar["\betaPSC_{p,q,k}"]
        & \HZ_p(\Gamma; \Fin \Gamma) \ar[dd, "\betaaC_p"] \\
      \StolzRel{p + 2q + 4k}{\B \Gamma} \tens \C \dar["\alpha \tens \C"] & \\
      \KO_{p+2q}(\CstarRed \Gamma)  \tens \C \rar["\cxfy \tens \C"]  & \KU_p(\CstarRed \Gamma) \tens \C
    \end{tikzcd}
  \end{equation*}
\end{thm}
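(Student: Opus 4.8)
The plan is to assemble $\betaPSC_{p,q,k}$ from the two strands flagged in the introduction: the geometric construction of \citeauthor{BG95Eta}, which supplies the degree-$0$ part over the classifying space of a finite cyclic subgroup, and a product construction with tori, which promotes degree $0$ to degrees $1$ and $2$. First recall, following \citeauthor{matthey:delocChern}~\cite{matthey:delocChern}, that for $p \in \{0,1,2\}$ the space $\HZ_p(\Gamma; \Fin \Gamma)$ is spanned by explicit classes of product shape: each is $\psi_\ast(c \times [\mathbb{T}^p])$ for a homomorphism $\psi \colon C \times \Z^p \to \Gamma$ with $C$ finite cyclic, where $c \in \HZ_0(C; \Fin C)$ and $[\mathbb{T}^p] \in \HZ_p(\Z^p; \C)$ is the fundamental class of the $p$-torus (for $p = 0$ read $[\mathbb{T}^0] = [\mathrm{pt}]$). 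Since $\Fin \Gamma = \Fin^0 \Gamma \oplus \Fin^1 \Gamma$, and since under the character isomorphism $\HZ_0(C; \Fin^q C)$ is the image of $\RU_0^q(C) \tens \C$ together with --- only when $q = 0$ --- the line spanned by the identity element, a basis of $\HZ_p(\Gamma; \Fin^q \Gamma)$ can be chosen consisting of classes $\psi_\ast(c \times [\mathbb{T}^p])$ with $c$ coming from $\RU_0^q(C) \tens \C$, together with classes in the summand $\HZ_p(\Gamma; \C) \subseteq \HZ_p(\Gamma; \Fin^0 \Gamma)$ supported at the identity (the latter only when $q = 0$). As every map in the asserted diagram is $\C$-linear, it is enough to \emph{choose} such a basis, to define $\betaPSC_{p,q,k}$ on it, and to check commutativity on basis elements.

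Fix a basis element $\psi_\ast(c \times [\mathbb{T}^p])$ with $c \in \RU_0^q(C) \tens \C$. By \cite[Theorem~2.1]{BG95Eta} --- applicable because $2q + 4k - 1 \geq 5$ --- there is a closed psc spin manifold $(L, \ell, g_L)$ of dimension $2q + 4k - 1$ with $\ell \colon L \to \B C$ whose relative eta-invariants (equivalently, whose higher $\rho$-invariant, via the trace maps) realize $c$ under the real delocalized Chern character. Since $2q + 4k - 1$ is odd, $\SpinBordism{2q+4k-1}{\B C}$ is finite, so $N \cdot [L, \ell] = 0$ for some $N \in \N$; fix a compact spin null-bordism $(V, \lambda)$ of $N$ copies of $(L, \ell)$. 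Then $(W, \Phi) := (V \times \mathbb{T}^p,\ \lambda \times \mathrm{pr})$ is a compact spin manifold of dimension $p + 2q + 4k$ whose boundary is $N$ copies of $L \times \mathbb{T}^p$ carrying the product psc metric $g_L \times g_{\mathbb{T}^p}$, and composing the classifying map with $\psi$ gives a map to $\B\Gamma$. We set
\[
  \betaPSC_{p,q,k}\bigl( \psi_\ast(c \times [\mathbb{T}^p]) \bigr) := \tfrac{1}{N}\, \bigl[\, W,\ \Phi,\ g_L \times g_{\mathbb{T}^p} \,\bigr] \ \in\ \StolzRel{p+2q+4k}{\B\Gamma} \tens \C .
\]
For a basis element lying in $\HZ_p(\Gamma; \C)$ one instead takes a closed spin manifold of dimension $p + 4k$ over $\B\Gamma$, say $\mathbb{T}^p$ times a spin manifold of nonzero $\hat{A}$-genus, realizing the corresponding $\KO$-homology class, and regards it --- with empty psc boundary --- as a relative class; here the factor $4k$ is what makes a nonzero $\hat{A}$-genus possible.

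It remains to compute $(\cxfy \tens \C) \circ (\alpha \tens \C)$ on these classes and match it with $\betaaC_p$. I would use three facts, the first two being multiplicativity statements to be established beforehand. \emph{(i)} The relative index map $\alpha$ is compatible with products by closed spin manifolds: $\alpha[W, \Phi, g_L \times g_{\mathbb{T}^p}]$ is the image under $\psi_\ast$ of the exterior product of $\alpha[V, \lambda, g_L] \in \KO_{2q+4k}(\CstarRed C)$ with $\beta[\mathbb{T}^p] \in \KO_p(\B\Z^p)$, where $\beta$ denotes the left-hand vertical map of~\eqref{eq:mappingPSCtoAnalysis}. \emph{(ii)} The real delocalized Chern character together with \citeauthor{matthey:delocChern}'s maps $\betatC_\bullet$ and $\betaaC_\bullet$ are multiplicative for the relevant exterior products, so that $\betaaC_p(\psi_\ast(c \times [\mathbb{T}^p]))$ equals, after the $4k$-fold Bott identification $\KU_{p+2q+4k}(\CstarRed\Gamma) \tens \C \cong \KU_p(\CstarRed\Gamma) \tens \C$, the image under $\psi_\ast$ of the exterior product of $\betaaC_0(c)$ with $\betaaC_p([\mathbb{T}^p])$. \emph{(iii)} The degree-$0$ identity $(\cxfy \tens \C)(\alpha \tens \C)[V, \lambda, g_L] = \betaaC_0(c)$ in $\KU_{2q+4k}(\CstarRed C) \tens \C \cong \KU_0(\CstarRed C) \tens \C$: this is exactly \citeauthor{BG95Eta}'s computation read through the real delocalized Chern character, since the relative APS-index of $(V, \lambda)$ with psc boundary is pinned down by the $\rho$-invariant of $(L, \ell, g_L)$, and that $\rho$-invariant realizes $c$. (For basis elements in $\HZ_p(\Gamma; \C)$ one instead uses that a closed spin manifold, viewed as a relative class, has $\alpha$-invariant $\nu \circ \beta$ of it, which complexifies to $\betaaC_p$ on $\HZ_p(\Gamma; \C)$.) Combining (i)--(iii) and applying $\psi_\ast$ yields the claimed equality on each basis element, and $\C$-linearity then gives commutativity of the whole diagram.

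The hard part is not any single step but making (i) and (ii) precise: one has to control simultaneously the multiplicative structures on Stolz' sequence, the Higson--Roe sequence, equivariant $\KO$-homology, and group homology, and to pass to the \emph{real} form of the delocalized Chern character of \citeauthor{BC88Chern}~\cite{BC88Chern} --- which is exactly where the degree shift by $2q$ (matching $\Fin^q \Gamma \hookrightarrow \Fin \Gamma$) and the $4k$-fold Bott periodicity in the target are forced. Once that framework and the multiplicativity of $\alpha$ and of $\betaaC_\bullet$ under $-\times[\mathbb{T}^p]$ are in place, the remaining content of the theorem reduces to the degree-$0$ statement, i.e.\ to \citeauthor{BG95Eta}'s theorem in the packaging provided by the real delocalized Chern character.
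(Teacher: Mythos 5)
The overall plan --- produce psc data over finite cyclic subgroups using Botvinnik--Gilkey, then promote to degrees $1$ and $2$ by exterior products, and verify the diagram by multiplicativity of $\alpha$ and of Matthey's maps --- is the right one and is essentially the strategy the paper follows (via \cref{prop:mattheysMaps,finiteRationalSurj,lem:PSClift}). The use of a null-bordism $V$ of $N$ copies of the closed psc manifold $(L,\ell,g_L)$, together with division by $N$ in rationalized coefficients, is a perfectly acceptable way to produce the required preimage under $\alpha\tens\Q$; the paper instead invokes the abstract rational surjectivity of $\alpha$ for finite groups (\cref{finiteRationalSurj}), which comes from the same Botvinnik--Gilkey result.

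However, there is a concrete error in the starting decomposition, and it is not a technical slip but a genuine gap: you claim that for $p\in\{0,1,2\}$ the space $\HZ_p(\Gamma;\Fin\Gamma)$ is spanned by classes of the form $\psi_\ast(c\times[\mathbb T^p])$ pushed forward along homomorphisms $\psi\colon C\times\Z^p\to\Gamma$. This is correct for $p=0$ and $p=1$ (where $\mathbb T^1=\sphere^1$), but false for $p=2$. Matthey's theorem uses $G^{(2),g}=\pi_1(\Sigma_g)$ with $g$ allowed to be \emph{arbitrary}, i.e.\ surface groups of every genus, precisely because $\HZ_2(\Gamma;\C)$ is generated by images of fundamental classes of surfaces of unbounded genus, not by tori alone. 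Concretely, for $\Gamma=\pi_1(\Sigma_g)$ with $g\geq 2$ the group has no $\Z^2$-subgroups and, more to the point, every homomorphism $\Z^2\to\Gamma$ factors through a cyclic group, so the induced map $\HZ_2(\Z^2;\Z)\to\HZ_2(\Gamma;\Z)$ vanishes, while $\HZ_2(\Gamma;\Z)\iso\Z$. Your asserted spanning set therefore misses all of degree $2$ already in this example, and consequently $\betaPSC_{2,q,k}$ is only defined on a proper subspace. To repair the argument you must replace $\mathbb T^2$ by $\Sigma_g$ for arbitrary $g\geq 1$ (and correspondingly $\Z^2$ by $\pi_1(\Sigma_g)$), which is exactly the paper's $G^{(2),g}$. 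The remaining steps of your proposal then go through, since the product metric on $L\times\Sigma_g$ is still psc after scaling $g_L$, and the exterior product arguments do not rely on flatness of the surface.
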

Here $\cxfy \colon \KO_\ast(\CstarRed \Gamma) \to \KU_\ast(\CstarRed \Gamma)$ is the complexification map from real to complex K-theory.
We implicitly use that complex K-theory is $2$-periodic and real K-theory is rationally $4$-periodic.

\begin{rem}
  We do not claim that our maps $\betaPSC_{p,q,k}$ are canonical (unlike the original maps of Matthey).
  Indeed, their construction depends on choosing pre-images under the surjective map $\alpha \tens \Q \colon \StolzRel{\ast}{\B H} \tens \Q \to \KO_\ast(\CstarRed H) \tens \Q$ for each finite cyclic group $H$.
  However, after fixing these choices it is in principle possible to trace through the construction to obtain explicit formulas for $\betaPSC_{p,q,k}$ similarly as in Matthey's work.
\end{rem}

  In any case, the existence of $\betaPSC_{p,q,k}$ implies lower bounds and surjectivity results:

  \begin{cor}\label{cor:RlowerBound}
    Let $n \geq 7$.
    If the rational Baum--Connes assembly map $\mu \otimes \Q$ is injective, then the rank of $\StolzRel{n}{\B \Gamma}$ is at least the dimension of

    \begin{equation*}
      \begin{cases}
        \HZ_0(\Gamma; \Fin^0 \Gamma) \oplus \HZ_2(\Gamma; \Fin^1 \Gamma) & n \equiv 0 \mod 4,\\
        \HZ_1(\Gamma; \Fin^0 \Gamma) & n \equiv 1 \mod 4,\\
        \HZ_0(\Gamma; \Fin^1 \Gamma) \oplus \HZ_2(\Gamma; \Fin^0 \Gamma) & n \equiv 2 \mod 4,\\
        \HZ_1(\Gamma; \Fin^1 \Gamma) & n \equiv 3 \mod 4.
      \end{cases}
    \end{equation*}

  \end{cor}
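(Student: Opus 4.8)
The plan is to read everything off from \cref{thm:betaPSC} together with the properties of Matthey's maps recalled above. Throughout I use that $\operatorname{rank}A=\dim_{\C}(A\tens\C)$ for an abelian group $A$, so it suffices to exhibit, for each residue of $n$ modulo $4$, an injective $\C$-linear map from the asserted direct sum of homology groups into $\StolzRel{n}{\B\Gamma}\tens\C$.

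First I would establish injectivity of a single $\betaPSC_{p,q,k}$. Since $\betatC_p$ is a right inverse of the isomorphism \labelcref{eq:delocChern}, the composite $\chub_\Gamma\circ\betatC_p$ is the canonical inclusion of $\HZ_p(\Gamma;\Fin\Gamma)$ as the $k=0$ summand of $\bigoplus_{k}\HZ_{p+2k}(\Gamma;\Fin\Gamma)$; in particular $\betatC_p$ is injective. Because $\betaaC_p=(\mu\tens\C)\circ\betatC_p$ and the rational Baum--Connes map $\mu\tens\Q$ --- equivalently $\mu\tens\C$ --- is assumed injective, $\betaaC_p$ is injective, and so is its restriction along the split inclusion $\HZ_p(\Gamma;\Fin^q\Gamma)\hookrightarrow\HZ_p(\Gamma;\Fin\Gamma)$ (recall $\Fin\Gamma=\Fin^0\Gamma\oplus\Fin^1\Gamma$). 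The commuting diagram of \cref{thm:betaPSC} identifies this restriction with $(\cxfy\tens\C)\circ(\alpha\tens\C)\circ\betaPSC_{p,q,k}$, so $\betaPSC_{p,q,k}$ is injective.

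Next I would organise the contributions by residue. The map $\betaPSC_{p,q,k}$ lands in degree $p+2q+4k$, so the pair $(p,q)$ contributes to $n$ precisely when $n\equiv p+2q\pmod4$, and an elementary estimate shows that for every $n\ge7$ and every such $(p,q)$ one can pick $k\ge1$ with $p+2q+4k=n$ and $4k+2q\ge6$. The pairs with $p+2q\equiv n$ are: $(1,0)$ for $n\equiv1$; $(1,1)$ for $n\equiv3$; $(0,0)$ and $(2,1)$ for $n\equiv0$; $(2,0)$ and $(0,1)$ for $n\equiv2$. In the odd cases a single $\betaPSC_{p,q,k}$ does the job. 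In the two even cases I must check that the sum of the two maps is injective on the direct sum of their sources. Composing with $(\cxfy\tens\C)\circ(\alpha\tens\C)$, applying \cref{thm:betaPSC} to each summand, invoking $\betaaC_p=(\mu\tens\C)\circ\betatC_p$, the injectivity of $\mu\tens\C$, and Bott periodicity to place both pieces in a common equivariant K-homology group, this reduces to the injectivity of a (periodicity-reindexed) sum $\betatC_0\oplus\betatC_2$ on $\HZ_0(\Gamma;\Fin^{q_1}\Gamma)\oplus\HZ_2(\Gamma;\Fin^{q_2}\Gamma)$. Post-composing once more with the isomorphism $\chub_\Gamma$ and using its compatibility with periodicity, the first piece maps onto the $\HZ_0(\Gamma;\Fin\Gamma)$-summand and the second onto the $\HZ_2(\Gamma;\Fin\Gamma)$-summand of \labelcref{eq:delocChern}; these are distinct summands and each inclusion $\Fin^q\Gamma\hookrightarrow\Fin\Gamma$ is injective, so the sum is injective. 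Comparing dimensions then gives exactly the four cases of the statement.

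Granting \cref{thm:betaPSC}, the remaining content is a diagram chase plus bookkeeping on residues; the one point I expect to require genuine care is the compatibility of the delocalized Chern character and of Matthey's section $\betatC$ with Bott periodicity used in the even cases --- i.e.\ that the degree-$0$ and degree-$2$ contributions really do land in complementary summands of \labelcref{eq:delocChern}. This should be extractable from Matthey's construction but is worth spelling out.
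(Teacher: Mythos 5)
Your proposal is correct and follows essentially the same route as the paper: deduce injectivity of $\betaaC_p$ from the right-inverse property of $\betatC_p$ and the assumed injectivity of $\mu\tens\Q$, feed this through the diagram of \cref{thm:betaPSC}, and sort the pairs $(p,q)$ by the residue of $n$ modulo $4$. The one point you flag as needing care --- that for $n$ even the two contributions land in complementary summands under the (Pontryagin) Chern character --- is exactly what \cref{prop:realEquChern} and \cref{prop:mattheyKO} supply, and the paper's own proof leaves this implicit in the assertion that the summed map \labelcref{eq:betaPSCInj} is injective.
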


  \begin{cor}\label{cor:PlowerBound}
    Let $n \geq 7$.
    If the rational Baum--Connes assembly map $\mu \otimes \Q$ is injective, then the rank of $\StolzPos{n-1}{\B \Gamma}$ is at least the dimension of
    \begin{equation*}
    \begin{cases}
      \HZ_0(\Gamma; \Fin^0_0 \Gamma) \oplus \HZ_2(\Gamma; \Fin^1 \Gamma) & n \equiv 0 \mod 4,\\
      \HZ_1(\Gamma; \Fin^0_0 \Gamma) & n \equiv 1 \mod 4,\\
      \HZ_0(\Gamma; \Fin^1 \Gamma) \oplus \HZ_2(\Gamma; \Fin^0_0 \Gamma) & n \equiv 2 \mod 4,\\
      \HZ_1(\Gamma; \Fin^1 \Gamma) & n \equiv 3 \mod 4,
    \end{cases}
  \end{equation*}
  where $\Fin_0^0 = \{f \in \Fin^0 \mid f(1) = 0 \}$.
  \end{cor}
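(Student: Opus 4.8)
The plan is to carry the classes furnished by \cref{thm:betaPSC} from Stolz' relative group into the positive scalar curvature bordism group along the boundary map $\partial\colon\StolzRel{m}{\B\Gamma}\to\StolzPos{m-1}{\B\Gamma}$ of Stolz' sequence, and to detect their linear independence via the higher $\rho$-invariant together with the commuting ladder \eqref{eq:mappingPSCtoAnalysis}. Fix $n\geq7$. For each residue of $n$ modulo $4$ there are one or two triples $(p,q,k)$ with $p\in\{0,1,2\}$, $q\in\{0,1\}$, $k\geq1$, $4k+2q\geq6$ and $p+2q+4k=n$, whose degree $p$ and parity $q$ are precisely the ones appearing in the statement: $(0,0,n/4)$ and $(2,1,(n-4)/4)$ when $n\equiv0$; $(0,1,(n-2)/4)$ and $(2,0,(n-2)/4)$ when $n\equiv2$; and the single triples $(1,0,(n-1)/4)$ when $n\equiv1$ and $(1,1,(n-3)/4)$ when $n\equiv3$ — one checks the constraints $k\geq1$ and $4k+2q\geq6$ in each case using $n\geq7$. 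Write $N^0=\Fin^0_0\Gamma$ and $N^1=\Fin^1\Gamma$. By \cref{thm:betaPSC} each such triple gives a map $(\partial\tens\C)\circ\betaPSC_{p,q,k}\colon\HZ_p(\Gamma;\Fin^q\Gamma)\to\StolzPos{n-1}{\B\Gamma}\tens\C$, so it suffices to show that the combined map $\Phi=\sum_{(p,q,k)}(\partial\tens\C)\circ\betaPSC_{p,q,k}$ is injective on $\bigoplus_{(p,q)}\HZ_p(\Gamma;N^q)$: since $\Phi$ factors through $\partial\tens\C$, this forces the rank of $\StolzPos{n-1}{\B\Gamma}$ to be at least $\dim_{\C}\bigoplus_{(p,q)}\HZ_p(\Gamma;N^q)$, which is the asserted quantity.

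To prove $\Phi$ injective I would post-compose it with $\rho\colon\StolzPos{n-1}{\B\Gamma}\to\SG^\R_{n-1}(\Gamma)$ and then with the complexification map $\SG^\R_{n-1}(\Gamma)\to\SG^\C_{n-1}(\Gamma)$. By commutativity of \eqref{eq:mappingPSCtoAnalysis} (which gives $\rho\circ\partial=\partial\circ\alpha$), naturality of complexification, and the commutative diagram of \cref{thm:betaPSC}, the restriction of the resulting composite to the $(p,q,k)$-summand equals $\partial\circ\betaaC_p$, restricted to $\HZ_p(\Gamma;N^q)\subseteq\HZ_p(\Gamma;\Fin\Gamma)$, where $\partial\colon\KU_p(\CstarRed\Gamma)\tens\C\to\SG^\C_{p-1}(\Gamma)\tens\C$ is now the boundary map of the complex Higson--Roe sequence (using the periodicities already invoked in \cref{thm:betaPSC}). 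The values of $p$ occurring are $0$ and $2$ for $n\equiv0,2$, or the single value $1$ for $n\equiv1,3$, so by $2$-periodicity all the $\betaaC_p$ in question land in one group $\KU_{p_0}(\CstarRed\Gamma)\tens\C$, $p_0\in\{0,1\}$. Hence it is enough to prove: if $\sum_{(p,q)}\betaaC_p(x_{p,q})\in\ker\partial$ with $x_{p,q}\in\HZ_p(\Gamma;N^q)$, then every $x_{p,q}=0$.

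By exactness of the Higson--Roe sequence, $\ker\partial$ is the image of the complex assembly map $\nu$, which factors as $\KU_\ast^\Gamma(\E\Gamma)\tens\C\xrightarrow{\,j\,}\KU_\ast^\Gamma(\Eub\Gamma)\tens\C\xrightarrow{\mu\tens\C}\KU_\ast(\CstarRed\Gamma)\tens\C$ with $j$ induced by $\E\Gamma\to\Eub\Gamma$. Since $\betaaC_p=(\mu\tens\C)\circ\betatC_p$ and $\mu\tens\Q$, hence $\mu\tens\C$, is injective by hypothesis, we get $\sum_{(p,q)}\betatC_p(x_{p,q})\in\operatorname{im} j$. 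Applying the delocalized Chern character $\chub_\Gamma$ of \eqref{eq:delocChern} and using that $\chub_\Gamma\circ\betatC_p$ is the inclusion of $\HZ_p(\Gamma;\Fin\Gamma)$ as the $k=0$ summand of $\bigoplus_k\HZ_{p+2k}(\Gamma;\Fin\Gamma)$, the left-hand side becomes $\bigoplus_{(p,q)}x_{p,q}$, located in pairwise distinct homological degrees, whereas $\chub_\Gamma$ sends $\operatorname{im} j$ onto the subspace $\bigoplus_k\HZ_{\ast+2k}(\Gamma;\C\delta_1)$ induced by $\C\delta_1\hookrightarrow\Fin\Gamma$ from the untwisted homology $\HZ_\ast(\B\Gamma;\C)=\HZ_\ast(\Gamma;\C\delta_1)$ — a standard compatibility of $\chub_\Gamma$ on $\KU_\ast^\Gamma(\E\Gamma)\tens\C$ with the ordinary Chern character of $\B\Gamma$. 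Comparing homological degree forces each $x_{p,q}$ to lie in $\HZ_p(\Gamma;\C\delta_1)$. But $\delta_1$ is conjugation-invariant, so $\C\delta_1\subseteq\Fin^0\Gamma$ with $\C\delta_1\cap\Fin^1\Gamma=0$, and evaluation at $1$ is a $\C\Gamma$-linear splitting $\Fin^0\Gamma=\C\delta_1\oplus\Fin^0_0\Gamma$; hence inside $\HZ_p(\Gamma;\Fin\Gamma)=\HZ_p(\Gamma;\C\delta_1)\oplus\HZ_p(\Gamma;\Fin^0_0\Gamma)\oplus\HZ_p(\Gamma;\Fin^1\Gamma)$ the subspace $\HZ_p(\Gamma;\C\delta_1)$ meets $\HZ_p(\Gamma;N^q)$ trivially for both $q$. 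Therefore all $x_{p,q}=0$ and $\Phi$ is injective.

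The hard part, and the only place where the bound for $\StolzPos{n-1}{\B\Gamma}$ needs genuinely more than the bound for $\StolzRel{n}{\B\Gamma}$ in \cref{cor:RlowerBound} (where $\alpha\tens\C$ is already injective on the image of $\betaPSC_{p,q,k}$), is the identification just carried out of the analytic kernel $\ker\partial$ with the assembled untwisted homology $\HZ_\ast(\B\Gamma;\C)$, i.e.\ the $\C\delta_1$-summand of $\HZ_\ast(\Gamma;\Fin\Gamma)$: this is exactly what forces $\Fin^0$ to be cut down to $\Fin^0_0$ in the symmetric direction $q=0$, whereas the antisymmetric direction $q=1$ is untouched because $\C\delta_1$ is disjoint from $\Fin^1\Gamma$; and keeping the two pieces from interfering when $n\equiv0,2$ relies on $\betatC_p$ being a \emph{pure} section of $\chub_\Gamma$, so that $\chub_\Gamma$ places them in the distinct homological degrees $0$ and $2$.
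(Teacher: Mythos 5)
Your proof is correct and takes essentially the same approach as the paper's, which cites Matthey's Shapiro-based decomposition of the handicrafted Chern character to assert that $\betatC_p\bigl(\HZ_p(\Gamma;\Fin_0^0\Gamma\oplus\Fin^1\Gamma)\bigr)$ meets the image of $\KU_p(\B\Gamma)\tens\C\hookrightarrow\KU_p^\Gamma(\Eub\Gamma)\tens\C$ trivially, and then invokes ``a diagram chase involving \labelcref{eq:mappingPSCtoAnalysis}.'' You have spelled out that chase in full --- routing through $\rho$ and the Higson--Roe boundary rather than through the forgetful map and $\nu$, an equivalent path in the commuting ladder --- and made explicit that $\chub_\Gamma(\operatorname{im} j)=\bigoplus_k\HZ_{*+2k}(\Gamma;\C\delta_1)$ together with the $\C\Gamma$-module splitting $\Fin^0\Gamma=\C\delta_1\oplus\Fin_0^0\Gamma$, which is exactly the content behind the paper's citation of Matthey's Theorem~1.4.
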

    In comparison, \citeauthor{BG95Eta}~\cite[Theorem~0.1]{BG95Eta} imply that for a finite group $H$ and $n \geq 6$ even, the rank of $\StolzPos{n-1}{\B H}$ is bounded below by the dimension of $\HZ_0(H; \Fin^0_0 H)$ if $n \equiv 0 \mod 4$ or $\HZ_0(H; \Fin^1 H)$ if $n \equiv 2 \mod 4$.
    
      \begin{ex}
    We describe an explicit example illustrating the non-trivial content of \cref{cor:RlowerBound,cor:PlowerBound}.
    Let $\Sigma_g$ denote the oriented surface of genus $g \geq 1$.
    Let $n$ be a positive integer.
    Consider the group $\Gamma = \pi_1(\Sigma_g) \times \Z /n\Z$.
    Then the Baum--Connes assembly map for $\Gamma$ is an isomorphism\footnote{This follows readily from~\citeauthor{HK01BCaTmenable}~\cite{HK01BCaTmenable} because $\pi_1(\Sigma_g) \times \Z/n\Z$ is a-T-menable. However, we should note that the case of surface groups goes back to the original article of \citeauthor{BC00GeometricKTheory}~\cite{BC00GeometricKTheory}.}.
    So our results are applicable.
    Next we explicitly compute the homology groups that appear in the corollaries for this example.
    Start with the group homology of $\Gamma$ with trivial coefficients $\C$.
    The group homology of $\Z/n\Z$ is torsion in all positive degrees.
    Hence the Künneth theorem implies that $\HZ_\ast(\Gamma; \C) \iso \HZ_\ast(\Sigma_g; \C)$.
    Thus the homology of $\Gamma$ is $\HZ_p(\Gamma; \C) \iso \C$ for $p \in \{0,2\}$, $\HZ_1(\Gamma; \C) \iso \C^{2g}$, and zero in degrees greater than $2$.    
    To proceed, observe that any finite order element of $\Gamma$ is of the form $(1, t^k)$, where $1$ denotes the neutral element of $\pi_1(\Sigma_g)$ and $t$ the generator of $\Z/n\Z$.
    The action by conjugation is trivial on these elements.
    We deduce that $\Fin \Gamma$ is isomorphic to the trivial $\C \Gamma$-module $\C^{n}$.
    By counting dimensions, we see that $\Fin^0 \Gamma \iso \C^{\left\lfloor \frac{n}{2} \right\rfloor + 1}$, $\Fin^0_0 \Gamma \iso \C^{\left\lfloor \frac{n}{2} \right\rfloor}$, $\Fin^1 \Gamma \iso \C^{\left\lceil \frac{n}{2} \right\rceil - 1}$ as trivial $\C \Gamma$-modules.
    Together with the computation of $\HZ_\ast(\Gamma; \C)$, we deduce:
    \begin{gather*}
      \HZ_0(\Gamma; \Fin^0 \Gamma) \iso \C^{\left\lfloor \frac{n}{2} \right\rfloor + 1}, \quad 
        \HZ_0(\Gamma; \Fin^0_0 \Gamma) \iso \C^{\left\lfloor \frac{n}{2} \right\rfloor}, \quad
        \HZ_2(\Gamma; \Fin^1 \Gamma) \iso \C^{\left\lceil \frac{n}{2} \right\rceil - 1}, \\
       \HZ_1(\Gamma; \Fin^0 \Gamma) \iso \C^{2g \left(\left\lfloor \frac{n}{2} \right\rfloor+1 \right)}, \quad \HZ_1(\Gamma; \Fin^0_0 \Gamma) \iso \C^{2g \left\lfloor \frac{n}{2} \right\rfloor},\\
       \HZ_0(\Gamma; \Fin^1 \Gamma) \iso \C^{\left\lceil \frac{n}{2} \right\rceil - 1}, \quad 
        \HZ_2(\Gamma; \Fin^0 \Gamma) \iso \C^{\left\lfloor \frac{n}{2} \right\rfloor + 1}, \quad
        \HZ_2(\Gamma; \Fin^0_0 \Gamma) \iso \C^{\left\lfloor \frac{n}{2} \right\rfloor}, \\
       \HZ_1(\Gamma; \Fin^1 \Gamma) \iso \C^{2g \left(\left\lceil \frac{n}{2} \right\rceil-1 \right)}.
    \end{gather*}
    This shows that for $n \geq 3$ all homology groups which appear in the conclusion of \cref{cor:RlowerBound,cor:PlowerBound} are non-trivial.    
  \end{ex}

  \begin{cor}\label{cor:surj}
    Let $n \geq 7$.
    Let the rational homological dimension of $\Gamma$ be at most $2$.
    Then, if the rational Baum--Connes assembly map $\mu \tens \Q$ is surjective, the rational relative index map
    \begin{equation*}
      \alpha \otimes \Q \colon \StolzRel{n}{\B \Gamma} \otimes \Q \twoheadrightarrow \KO_n(\CstarRed \Gamma) \otimes \Q
    \end{equation*}
    is surjective.

    If $\mu \tens \Q$ is an isomorphism, then the rational higher rho-invariant
      \begin{equation*}
       \rho \otimes \Q \colon \StolzPos{n-1}{\B \Gamma} \otimes \Q \twoheadrightarrow \SG_{n-1}^\R(\Gamma) \otimes \Q
    \end{equation*}
    is also surjective.
  \end{cor}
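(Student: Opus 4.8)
The plan is to reduce both assertions to \cref{thm:betaPSC} by passing to complex $\KU$-theory, where the delocalized Chern character makes the relevant groups computable. Since $-\otimes_\Q\C$ is exact and faithful on $\Q$-vector spaces, a $\Q$-linear map is onto iff it is onto after $\otimes_\Q\C$, so I work with $\C$-coefficients throughout. Recall that complexification $\cxfy$ admits a rational retraction $\tfrac12\rlfy$ (one has $\rlfy\cxfy=2$ and $\cxfy\rlfy=1+\psi^{-1}$ with $\psi^{-1}$ complex conjugation); hence $\cxfy\otimes\Q$ is split injective for any ($\Gamma$-)$C^\ast$-algebra, and its image is the $+1$-eigenspace of $\psi^{-1}$ on the complexified $\KU$-theory (on $\KU_\ast(\mathrm{pt})\otimes\Q=\Q[\beta^{\pm1}]$ one has $\psi^{-1}(\beta)=-\beta$). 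In particular, to see that $\alpha\otimes\Q$ is surjective it is enough to check that $\cxfy\bigl(\KO_n(\CstarRed\Gamma)\otimes\C\bigr)=\bigl(\KU_n(\CstarRed\Gamma)\otimes\C\bigr)^{\psi^{-1}}$ lies in the image of $\cxfy\circ(\alpha\otimes\C)$.

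To compute this eigenspace: as $\mu\otimes\Q$ is surjective, so is $\mu\otimes\C$, and since taking $\psi^{-1}$-isotypical parts is exact over $\C$ and $\mu$ is $\psi^{-1}$-equivariant, $\bigl(\KU_n(\CstarRed\Gamma)\otimes\C\bigr)^{\psi^{-1}}=\mu\bigl((\KU_n^\Gamma(\Eub\Gamma)\otimes\C)^{\psi^{-1}}\bigr)$. The Chern character $\chub_\Gamma$ identifies $\KU_n^\Gamma(\Eub\Gamma)\otimes\C$ with $\bigoplus_{k\in\Z}\HZ_{n+2k}(\Gamma;\Fin\Gamma)$, and the hypothesis that $\Gamma$ has rational homological dimension at most $2$ (so $\HZ_p(\Gamma;\Fin\Gamma)=0$ for $p\ge3$) leaves only the summands with $n+2k\in\{0,1,2\}$. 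On such a summand $\HZ_{n+2k}(\Gamma;\Fin^q\Gamma)$, the involution $\psi^{-1}$ acts as the product of the involution on $\Fin\Gamma$ induced by $\gamma\mapsto\gamma^{-1}$ (which is $(-1)^q$ on $\Fin^q\Gamma$) and the Bott twist $\psi^{-1}(\beta^{-k})=(-1)^k$, i.e.\ by $(-1)^{q+k}$; extracting $+1$-eigenspaces and running through the four residues of $n\bmod4$ recovers exactly the homology groups appearing in \cref{cor:RlowerBound}. Moreover each surviving summand, viewed inside $\KU_m^\Gamma(\Eub\Gamma)\otimes\C$ for $m=n+2k\in\{0,1,2\}$, is $\betatC_m\bigl(\HZ_m(\Gamma;\Fin^q\Gamma)\bigr)$ up to a power of $\beta$, since $\betatC_m$ is a section of $\chub_\Gamma$ in degree $m$. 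Pushing forward by $\mu$ and using $\betaaC_m=\mu\circ\betatC_m$, the eigenspace $\cxfy\bigl(\KO_n(\CstarRed\Gamma)\otimes\C\bigr)$ is spanned over $\C$, up to powers of $\beta$, by the subspaces $\betaaC_m\bigl(\HZ_m(\Gamma;\Fin^q\Gamma)\bigr)$ with $m\in\{0,1,2\}$, $q\in\{0,1\}$ and $m+2q\equiv n\pmod4$. For $n\ge7$ and any such $(m,q)$ there is $k\ge1$ with $4k+2q\ge6$ and $m+2q+4k=n$, so \cref{thm:betaPSC} places each $\betaaC_m\bigl(\HZ_m(\Gamma;\Fin^q\Gamma)\bigr)$ in the image of $\cxfy\circ(\alpha\otimes\C)$ (matching the powers of $\beta$ via the rational $4$-periodicity of $\KO$ and the $2$-periodicity of $\KU$). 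Hence $\cxfy\bigl(\KO_n(\CstarRed\Gamma)\otimes\C\bigr)$ lies in that image, and by the first paragraph $\alpha\otimes\Q$ is surjective.

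For the statement on $\rho$, assume $\mu\otimes\Q$ is an isomorphism. Then the real Novikov assembly map $\nu\otimes\Q$ is injective in every degree, because the complex Novikov assembly map is injective (it factors through $\mu\otimes\Q$ via the split injection $\KU_\ast(\B\Gamma)\otimes\Q\to\KU_\ast^\Gamma(\Eub\Gamma)\otimes\Q$) and complexification is rationally injective on both $\KO_\ast(\B\Gamma)$ and $\KO_\ast(\CstarRed\Gamma)$. By exactness of the bottom row of \eqref{eq:mappingPSCtoAnalysis}, injectivity of $\nu\otimes\Q$ in degree $n-1$ forces the connecting map $\partial\colon\KO_n(\CstarRed\Gamma)\otimes\Q\to\SG^\R_{n-1}(\Gamma)\otimes\Q$ to be surjective. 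As $\mu\otimes\Q$ is in particular surjective, the first part gives that $\alpha\otimes\Q$ is surjective; the commuting square $\rho\circ\partial=\partial\circ\alpha$ in \eqref{eq:mappingPSCtoAnalysis} then shows that the image of $\rho\otimes\Q$ contains $\partial$ applied to the image of $\alpha\otimes\Q$, which is all of $\SG^\R_{n-1}(\Gamma)\otimes\Q$. Hence $\rho\otimes\Q$ is surjective.

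I expect the main technical point to be the bookkeeping in the second paragraph: correctly identifying complex conjugation on $\KU^\Gamma_\ast(\Eub\Gamma)\otimes\C$ under the delocalized Chern character — in particular tracking the Bott twist relative to the parity splitting $\Fin\Gamma=\Fin^0\Gamma\oplus\Fin^1\Gamma$ — and verifying that the hypothesis on homological dimension is precisely what confines the relevant $\KU$-degrees to $\{0,1,2\}$, the range in which Matthey's sections $\betatC_m$, and hence \cref{thm:betaPSC}, are available.
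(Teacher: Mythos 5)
Your argument is correct and follows essentially the same route as the paper: under the homological dimension hypothesis the inverse of the delocalized Chern character is given entirely by Matthey's maps in degrees $0,1,2$, surjectivity of $\mu\otimes\Q$ pushes this onto $\KO_n(\CstarRed\Gamma)\otimes\Q$, \cref{thm:betaPSC} lifts the images to $\StolzRel{n}{\B\Gamma}$, and the $\rho$-statement follows from exactness of the Higson--Roe sequence plus the commuting square $\rho\circ\partial=\partial\circ\alpha$. The only difference is presentational: you re-derive the real/complex splitting via the $\psi^{-1}$-eigenspace decomposition and the sign bookkeeping $(-1)^{q+k}$, whereas the paper packages exactly this as \cref{prop:KOtoKU,prop:realEquChern,prop:mattheyKO} and simply cites it.
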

\section{The delocalized equivariant Pontryagin character}
In this section, we exhibit the \emph{delocalized equivariant Pontryagin character}, which is the real counterpart to the delocalized equivariant Chern character.
It is obtained from the delocalized Chern character by precomposing it with complexification.

Start with some preparations. The rationalized equivariant real K-homology $\KO_\ast^{\bullet} \tens \Q$ is $4$-periodic.
Indeed, it is a module over $\KO_\ast(\pt) \tens \Q \iso \Q[\alpha, \beta, \beta^{-1}] / \langle \alpha^2 - 4 \beta \rangle$ with $\alpha \in \KO_4(\pt)$, $\beta \in \KO_8(\pt)$ and module multiplication with $\alpha / 2$ implements the $4$-periodicity.
We will implicitly use this $4$-periodicity whenever convenient.

The complexification $\cxfy \colon \KO_\ast(\pt) \to \KU_\ast(\pt) \iso \Z[\xi, \xi^{-1}]$ satisies $\cxfy(\alpha) = 2 \xi^2$ and $\cxfy(\beta) = \xi^4$, where $\xi \in \KU_2(\pt)$.

Complex K-homology rationally decomposes into two copies of real K-homoloy:

\begin{prop}\label{prop:KOtoKU}
Complexification yields an isomorphism of proper equivariant homology theories:
\begin{equation}
 \KOtoKU := \cxfy + \xi^{-1} \cxfy \colon  \left( \KO^\bullet_\ast \oplus \KO^\bullet_{\ast+2} \right) \tens \Q \xrightarrow{\iso} \KU^\bullet_\ast \tens \Q.\label{eq:KOtoKU}
\end{equation}
\end{prop}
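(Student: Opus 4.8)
The plan is to reduce \cref{prop:KOtoKU} to a statement about finite groups acting on a point, and there to exploit the classical interplay between complexification and realification in equivariant $K$-theory. First I would observe that $\KOtoKU$ really is a natural transformation of proper equivariant homology theories: complexification $\cxfy$ is induced by a map of equivariant $K$-theory spectra (equivalently, by a functor on equivariant $KK$-theory), hence is compatible with the induction structure and with the connecting maps of the theories, and module multiplication by the invertible element $\xi^{-1}$ is likewise a transformation of homology theories. So the only content is the isomorphism claim. By the general machinery for proper equivariant homology theories with an induction structure, it suffices to check that $\KOtoKU$ is an isomorphism on the value at each homogeneous space $G/H$ with $H \leq G$ a finite subgroup: a transformation of such theories that is an isomorphism on all these orbits is an isomorphism, by induction over the cells of a proper $G$-CW structure together with a $\lim^1$/colimit argument. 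The induction structure then identifies $\mathcal{H}^G_\ast(G/H)$ with $\mathcal{H}^H_\ast(\pt)$ naturally in $\mathcal{H}$, so I am reduced to proving that
\[
  \KOtoKU = \cxfy + \xi^{-1}\cxfy \colon \bigl(\KO^H_\ast(\pt) \oplus \KO^H_{\ast+2}(\pt)\bigr) \tens \Q \longrightarrow \KU^H_\ast(\pt) \tens \Q
\]
is an isomorphism for every finite group $H$.

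To do this, fix $H$ and let $\psi$ denote the involution on $\KU^H_\ast(\pt)$ coming from complex conjugation. I would use the classical identities $\rlfy \circ \cxfy = 2$ and $\cxfy \circ \rlfy = 1 + \psi$, where $\rlfy$ is the realification map. Rationally the first identity shows that $\cxfy \tens \Q$ is injective, and together they show its image is exactly the $\psi$-fixed subspace (if $\psi(z) = z$ then $z = \tfrac12(1+\psi)(z) = \cxfy(\tfrac12 \rlfy(z))$). Next, $\psi$ acts on the Bott class by $\psi(\xi) = -\xi$, since complex conjugation inverts line bundles and hence acts by $-1$ on $\pi_2 \KU$; because $\xi$ is invertible, multiplication by $\xi^{\pm 1}$ is an isomorphism $\KU^H_\ast(\pt) \tens \Q \to \KU^H_{\ast \pm 2}(\pt) \tens \Q$ that interchanges the $\psi$-fixed and the $\psi$-anti-fixed subspaces, and rationally $\KU^H_\ast(\pt) \tens \Q$ is the direct sum of these two eigenspaces. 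Assembling the pieces: the first summand maps via $\cxfy$ isomorphically onto the $\psi$-fixed part of $\KU^H_\ast(\pt) \tens \Q$, while the second maps via $\xi^{-1}\cxfy$ onto $\xi^{-1}$ applied to the $\psi$-fixed part in degree $\ast + 2$, which is the $\psi$-anti-fixed part in degree $\ast$. Since the two eigenspaces are complementary and $\cxfy \tens \Q$, $\xi^{-1}\cxfy \tens \Q$ are injective on the respective summands, $\KOtoKU$ is an isomorphism on the point, which finishes the reduction. (This is the equivariant refinement of the elementary fact that rationally $\KU_\ast(\pt)$ is free of rank two over $\KO_\ast(\pt)$ on the classes $1$ and $\xi^{-1}$.)

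The step I expect to be the main obstacle is the first one: making precise that $\KO^\bullet_\ast$ and $\KU^\bullet_\ast$ are genuine proper equivariant homology theories equipped with an induction structure, that $\KOtoKU$ respects all of this structure, and that consequently the comparison on orbits $G/H$ (and hence on $\mathcal{H}^H_\ast(\pt)$) suffices. Once this framework is in place, the finite-group computation is essentially formal, relying only on the standard relations between $\cxfy$ and $\rlfy$ and on the action of complex conjugation on the Bott element.
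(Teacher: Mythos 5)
Your proof is correct and follows essentially the same route as the paper's: reduce to the coefficient modules $\KO^H_\ast(\pt)$ for finite $H$, then conclude from the standard relations $\rlfy\circ\cxfy = 2$ and $\cxfy\circ\rlfy = 1+\psi$ together with $\psi(\xi) = -\xi$. The only difference is presentational: the paper first uses periodicity to reduce to $i\in\{0,1\}$ and then writes down the explicit inverse $z\mapsto\tfrac12(\rlfy(z)\oplus[z])$ in degree $0$, whereas you phrase the same computation as a $\psi$-eigenspace decomposition of $\KU^H_\ast(\pt)\tens\Q$, which has the small advantage of working uniformly in all degrees at once; the explicit inverse in the paper is precisely the pair of eigenspace projections in your description.
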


The decomposition \labelcref{eq:KOtoKU} can be used to decompose the equivariant delocalized Chern character and thereby obtain the \emph{delocalized Pontryagin character}:
\begin{prop} \label{prop:realEquChern}
The equivariant delocalized Chern character composed with complexification yields an isomorphism
\begin{align*}
  \phub_\Gamma := \chub_\Gamma \circ \cxfy \colon \KO_p^\Gamma(\Eub \Gamma) \tens \C \xrightarrow{\cong}
  \bigoplus_{k \in \Z} \HZ_{p+4k}(\Gamma; \Fin^0\Gamma) \oplus \HZ_{p + 2 + 4k}(\Gamma; \Fin^1 \Gamma).
\end{align*}
\end{prop}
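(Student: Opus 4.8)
The plan is to read everything off the delocalized Chern character isomorphism $\chub_\Gamma$ of~\labelcref{eq:delocChern}, the complexification decomposition of \cref{prop:KOtoKU}, and the compatibility of both with the complex-conjugation involution on complex K-theory. First I would check that $\phub_\Gamma = \chub_\Gamma \circ \cxfy$ is injective: if $\cxfy(x) = 0$ in $\KU^\Gamma_p(\Eub\Gamma) \tens \Q$, then $\KOtoKU(x,0) = \cxfy(x) + \xi^{-1}\cxfy(0) = 0$, so $x = 0$ because $\KOtoKU$ is an isomorphism by \cref{prop:KOtoKU}; tensoring with $\C$ and composing with the isomorphism $\chub_\Gamma$ gives injectivity of $\phub_\Gamma$. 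It then remains to identify the image.

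Topological complex K-theory carries a natural complex-conjugation involution $c$ on the proper equivariant homology theory $\KU^\bullet_\ast$, and complexification is $c$-fixed, $c \circ \cxfy = \cxfy$; moreover $c$ acts on the Bott class $\xi \in \KU_2(\pt)$ by $-1$ (on coefficients it is the Adams operation $\psi^{-1}$). Hence in the splitting of \cref{prop:KOtoKU}, $c$ acts as $+\id$ on the summand $\cxfy(\KO^\Gamma_p(\Eub\Gamma) \tens \Q)$ and as $-\id$ on the summand $\xi^{-1}\cxfy(\KO^\Gamma_{p+2}(\Eub\Gamma) \tens \Q)$; since these are complementary, the image of $\cxfy$ in $\KU^\Gamma_p(\Eub\Gamma) \tens \Q$ is precisely the $(+1)$-eigenspace of $c$, and likewise after $\tens_\Q \C$.

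Next I would transport $c$ along $\chub_\Gamma$ and identify the corresponding involution on $\bigoplus_k \HZ_{p+2k}(\Gamma; \Fin\Gamma)$. Complex conjugation affects the delocalized Chern character in two independent ways: on the Bott direction it contributes $(-1)^k$ on the $k$-th summand (because $c = \psi^{-1}$ multiplies a degree-$2k$ class by $(-1)^k$), and on the group-element direction it is induced by the $\C\Gamma$-module involution $\tau \colon \Fin\Gamma \to \Fin\Gamma$, $f \mapsto (\gamma \mapsto f(\gamma^{-1}))$ (conjugating a representation replaces each eigenvalue of a finite order element $h$ by its inverse, i.e.\ by the eigenvalue of $h^{-1}$); the overall normalisation is fixed in homological degree $0$, where for a finite subgroup $H$ the map $c$ on $\KU^H_0(\pt) \tens \C \iso \RU(H) \tens \C$ is $V \mapsto \overline{V}$, which on characters is $\chi \mapsto \chi(\cdot^{-1})$ and hence corresponds to $\tau$ with no extra sign on the $k=0$ summand $\HZ_0(\Gamma; \Fin\Gamma)$, consistently with $\cxfy(\RO(H)\tens\C)$ being the self-conjugate part $\HZ_0(H; \Fin^0 H)$. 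Since $\tau$ equals $+\id$ on $\Fin^0\Gamma$ and $-\id$ on $\Fin^1\Gamma$, the transported involution acts on $\HZ_{p+2k}(\Gamma; \Fin^0\Gamma)$ by $(-1)^k$ and on $\HZ_{p+2k}(\Gamma; \Fin^1\Gamma)$ by $(-1)^{k+1}$. Combining this with the previous paragraph, $\phub_\Gamma$ is the inclusion of the $(+1)$-eigenspace, which is $\bigoplus_k \HZ_{p+4k}(\Gamma; \Fin^0\Gamma)$ (where $(-1)^k = +1$) together with $\bigoplus_k \HZ_{p+2+4k}(\Gamma; \Fin^1\Gamma)$ (where $(-1)^{k+1} = +1$); reindexing, this is exactly the stated target, so $\phub_\Gamma$ is the claimed isomorphism.

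I expect the main obstacle to be the third step: making rigorous how the complex-conjugation involution is transported by the handicrafted delocalized Chern character of \citeauthor{matthey:delocChern}, in particular separating the Bott-direction sign $(-1)^k$ from the $\gamma \mapsto \gamma^{-1}$ symmetry $\tau$ on $\Fin\Gamma$ and pinning down the normalisation; once this naturality statement is established the remainder is just bookkeeping with eigenspaces. An alternative route, probably more laborious, would be to bypass complex conjugation and instead chase Matthey's explicit degreewise formulas for $\chub_\Gamma$, checking directly that $\cxfy$ lands in the asserted summands in each homological degree $p \in \{0,1,2\}$.
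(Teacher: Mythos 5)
Your approach is correct in outline but genuinely different from the paper's. You identify the image of complexification as the $(+1)$-eigenspace of the conjugation involution $c$ on $\KU_p^\Gamma(\Eub\Gamma)\tens\C$ (using that $c$ fixes $\cxfy(\KO_p)$ and acts by $-1$ on $\xi^{-1}\cxfy(\KO_{p+2})$), and then transport $c$ through $\chub_\Gamma$ to the involution $\bigoplus_k (-1)^k\tau_\ast$ on $\bigoplus_k\HZ_{p+2k}(\Gamma;\Fin\Gamma)$, where $\tau$ is inversion on $\Gamma_\fin$; the $(+1)$-eigenspace there is exactly the asserted target. The paper instead proceeds by reduction to generators: it observes the claim for torsion-free $\Gamma$ via the ordinary Pontryagin character, proves it for a finite cyclic group $H$ by the elementary fact that characters of real representations are symmetric while characters in the image of $1-\tau$ are antisymmetric, extends to $\Gamma=G\times\Z/n\Z$ by multiplicativity of the Chern character, and finally invokes Matthey's result that $\KU_p^\Gamma(\Eub\Gamma)$ is generated by pushforwards of such product classes along arbitrary homomorphisms $G\times\Z/n\Z\to\Gamma$. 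Your route is conceptually cleaner and avoids the generation theorem, but --- as you yourself flag --- the heavy lifting is hidden in the claim that Matthey's handicrafted delocalized Chern character intertwines $c$ with $(-1)^k\tau_\ast$. That naturality is not among the properties Matthey states, so you would have to establish it, and the most natural way to do that is again to check it on the same generating elements the paper uses (the degree-$0$ verification for finite cyclic groups is precisely the character-symmetry computation above). So the two arguments are logically closely related: the paper performs the generator check directly to get the containment $\labelcref{eq:phGeneral}$, while you would perform an equivalent check to justify the eigenvalue transport. Either way, once the key input is in place, the concluding bookkeeping (periodicity and the isomorphism property of $\chub_\Gamma$) is the same.

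One small caveat: the sign $c(\xi)=-\xi$ and the resulting $(-1)^k$ on the $k$-th homological summand should be pinned down carefully against Matthey's normalization of $\chub_\Gamma$ (which is what fixes which degree plays the role of $k=0$), since a different normalization would swap the two eigenspaces and hence swap $\Fin^0$ and $\Fin^1$ in the answer. The paper's degree-$0$ finite-group computation $\labelcref{eq:phFinite}$ is exactly what anchors that normalization, and your argument needs the same anchor.
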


Since Matthey's maps are right-inverse to the delocalized Chern character, \cref{prop:realEquChern} immediately implies that they decompose as follows:
\begin{cor} \label{prop:mattheyKO}
Using the identification~\labelcref{eq:KOtoKU}, Matthey's inverse maps~\textup{\cite{matthey:delocChern}}
\begin{equation*}
\betatC_p \colon \HZ_p(\Gamma; \Fin\Gamma) \to \KU_{p}^\Gamma(\Eub{\Gamma}) \tens \C, \qquad p \in \{0,1,2\},
\end{equation*}
restrict to maps
\begin{align*}
\betatC_{p,q} \colon \HZ_p(\Gamma; \Fin^q\Gamma) \to \KO_{p+2q}^\Gamma(\Eub{\Gamma}) \tens \C, \qquad p \in \{0,1,2\}, q \in \{0,1\}.
\end{align*}
\end{cor}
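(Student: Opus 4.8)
The plan is to deduce the statement formally from \cref{prop:KOtoKU,prop:realEquChern} together with the fact that $\betatC_p$ is a section of $\chub_\Gamma$. Fix $p \in \{0,1,2\}$. Tensoring the identification $\eqref{eq:KOtoKU}$ of \cref{prop:KOtoKU} with $\C$, I would write
\[
  \KU_p^\Gamma(\Eub\Gamma) \tens \C = A_p \oplus B_p, \qquad A_p := \cxfy\bigl(\KO_p^\Gamma(\Eub\Gamma)\tens\C\bigr), \quad B_p := \xi^{-1}\cxfy\bigl(\KO_{p+2}^\Gamma(\Eub\Gamma)\tens\C\bigr).
\]
By the same proposition $\cxfy$ is injective on $\KO_p^\Gamma(\Eub\Gamma)\tens\C$ and $\xi^{-1}\cxfy$ is injective on $\KO_{p+2}^\Gamma(\Eub\Gamma)\tens\C$, so $A_p$ and $B_p$ are canonically identified with $\KO_p^\Gamma(\Eub\Gamma)\tens\C$ and $\KO_{p+2}^\Gamma(\Eub\Gamma)\tens\C$, respectively.

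Next I would transport this splitting across the isomorphism $\chub_\Gamma$ of $\eqref{eq:delocChern}$. Since $\phub_\Gamma = \chub_\Gamma\circ\cxfy$, \cref{prop:realEquChern} identifies $\chub_\Gamma(A_p) = \bigoplus_k \HZ_{p+4k}(\Gamma;\Fin^0\Gamma)\oplus\HZ_{p+2+4k}(\Gamma;\Fin^1\Gamma)$ as a subspace of $\bigoplus_k\HZ_{p+2k}(\Gamma;\Fin\Gamma) = \bigl(\bigoplus_k\HZ_{p+2k}(\Gamma;\Fin^0\Gamma)\bigr)\oplus\bigl(\bigoplus_k\HZ_{p+2k}(\Gamma;\Fin^1\Gamma)\bigr)$. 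Because $\{p+2k\}_{k\in\Z} = \{p+4k\}_{k\in\Z}\sqcup\{p+2+4k\}_{k\in\Z}$, this $\chub_\Gamma(A_p)$ is a coordinate sub-sum, and since $\chub_\Gamma$ is an isomorphism its complement $\chub_\Gamma(B_p)$ is forced to be the remaining coordinate sub-sum $\bigoplus_k\HZ_{p+2+4k}(\Gamma;\Fin^0\Gamma)\oplus\HZ_{p+4k}(\Gamma;\Fin^1\Gamma)$. In particular the $k=0$ piece $\HZ_p(\Gamma;\Fin^q\Gamma)$ lies inside $\chub_\Gamma(A_p)$ when $q=0$ and inside $\chub_\Gamma(B_p)$ when $q=1$.

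Finally, $\betatC_p$ being right-inverse to $\chub_\Gamma$ means precisely that $\chub_\Gamma\circ\betatC_p$ is the canonical inclusion of $\HZ_p(\Gamma;\Fin\Gamma)$ as the $k=0$ summand of $\bigoplus_k\HZ_{p+2k}(\Gamma;\Fin\Gamma)$; as $\chub_\Gamma$ is injective, the restriction of $\betatC_p$ to $\HZ_p(\Gamma;\Fin^q\Gamma)$ equals $\chub_\Gamma^{-1}$ applied to that subspace, and hence takes values in $A_p$ for $q=0$ and in $B_p$ for $q=1$. Composing with the inverse of the identification $A_p \cong \KO_p^\Gamma(\Eub\Gamma)\tens\C$ (via $\cxfy$) when $q=0$, respectively $B_p \cong \KO_{p+2}^\Gamma(\Eub\Gamma)\tens\C$ (via $\xi^{-1}\cxfy$) when $q=1$, produces the desired maps $\betatC_{p,q}\colon\HZ_p(\Gamma;\Fin^q\Gamma)\to\KO_{p+2q}^\Gamma(\Eub\Gamma)\tens\C$, and by construction $\betatC_p|_{\HZ_p(\Gamma;\Fin\Gamma)} = \KOtoKU\circ(\betatC_{p,0}\oplus\betatC_{p,1})$ under $\eqref{eq:KOtoKU}$. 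The only slightly delicate point is that $\chub_\Gamma$ genuinely separates $A_p$ from $B_p$ along the $\Fin^0/\Fin^1$-splitting and the degree-mod-$4$ bookkeeping — but this is exactly the content recorded by \cref{prop:realEquChern}, so once that is in hand the remaining argument is purely formal.
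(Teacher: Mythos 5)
Your approach is the same as the paper's, which simply observes that the claim follows from \cref{prop:realEquChern} together with the fact that $\betatC_p$ is right-inverse to $\chub_\Gamma$; your write-up fills in the formal bookkeeping, and for $q=0$ it is complete and correct. There is, however, one inference that does not hold as stated and which carries the weight of the case $q=1$: you claim that, because $\chub_\Gamma(A_p)$ is the coordinate subsum $\bigoplus_k \HZ_{p+4k}(\Gamma;\Fin^0\Gamma)\oplus\HZ_{p+2+4k}(\Gamma;\Fin^1\Gamma)$ and $\chub_\Gamma$ is an isomorphism, the complement $\chub_\Gamma(B_p)$ is \emph{forced} to be the remaining coordinate subsum. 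Complementarity alone does not force this --- a complement of a coordinate subsum need not be the complementary coordinate subsum (it could, for instance, be the graph of a nonzero linear map from the complementary subsum into $\chub_\Gamma(A_p)$), and in that case $\HZ_p(\Gamma;\Fin^1\Gamma)$ would not lie in $\chub_\Gamma(B_p)$. What you actually need is the containment $\chub_\Gamma(B_p)\subseteq \bigoplus_k \HZ_{p+2+4k}(\Gamma;\Fin^0\Gamma)\oplus\HZ_{p+4k}(\Gamma;\Fin^1\Gamma)$, which (together with complementarity) then gives equality. This containment is obtained by applying \cref{prop:realEquChern} with $p$ replaced by $p+2$ and using that $\chub_\Gamma$ intertwines multiplication by $\xi^{-1}$ with the tautological degree-two shift on $\bigoplus_k\HZ_{p+2k}(\Gamma;\Fin\Gamma)$ (compatibility of the delocalized Chern character with Bott periodicity, which is implicit in the formulation of~\labelcref{eq:delocChern}). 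You gesture at this in your closing sentence, but the statement of \cref{prop:realEquChern} in degree $p$ alone does not record the image of $B_p$; once the degree-$(p+2)$ instance and the periodicity compatibility are invoked explicitly, your argument is complete and coincides with the paper's.
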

Analogously as in the complex case we write $\betaaC_{p,q} := \mu \tens \C \circ \betat_{p,q}$.
By abuse of notation $\mu \colon \KO_{p+2q}^\Gamma(\Eub \Gamma) \to \KO_{p +2q}(\CstarRed \Gamma)$ denotes the real version of the Baum--Connes assembly map here.

The proofs of \cref{prop:KOtoKU,prop:realEquChern} can essentially be reduced to the case of finite groups.
Thus we need some facts about equivariant K-homology of a finite group and fix some notation.
For more details on the following we refer for example to~\cite[Chapter 2]{BG10Real}:

Let $\RU(H)$ denote the complex representation ring and $\RO(H)$ its real counterpart.
Let $\cxfy \colon \RO(H) \to \RU(H)$ be complexification and $\rlfy \colon \RU(H) \to \RO(H)$ be realification.
Let $\tau \colon \RU(H) \to \RU(H)$ be the map induced by complex conjugation.
Then $\cxfy \circ \rlfy = 1 + \tau$ and $\rlfy \circ \cxfy = 2$.
Let $\RU(H)/(1+\tau)$ be a shorthand for the quotient group $\RU(H)/(1+\tau)(\RU(H))$.
Then the equivariant K-homology of a point satisfies
\begin{align}
  \KO_i^H(\pt) \tens \Q &\iso
    \begin{cases}
      \RO(H) \tens \Q & i \equiv 0 \mod 4,\\
      \RU(H)/ (1 + \tau) \tens \Q & i \equiv 2 \mod 4,\\
      0 & \text{otherwise},
    \end{cases}\label{eq:equKO}\\
    \KU_i^H(\pt) \tens \Q &\iso
      \begin{cases}
        \RU(H) \tens \Q & i \equiv 0 \mod 2,\\
        0 & \text{otherwise}.
      \end{cases}\label{eq:equKU}
\end{align}
Complexification $\cxfy \colon \KO_i^H(\pt) \to \KU_i^H(\pt)$ is given by complexification of representations for $i \equiv 0 \mod 4$, and by the map $1 - \tau \colon \RU(H)/(1+\tau) \to \RU(H)$ for $i \equiv 2 \mod 4$.

We are now ready to prove the propositions of this section.

\begin{proof}[Proof of \cref{prop:KOtoKU}]
  To show that $\KOtoKU$ is an isomorphism of proper equivariant homology theories, it suffices to show that
  \begin{equation}
     \KOtoKU = \cxfy + \xi^{-1} \cxfy \colon  \left( \KO^H_i(\pt) \oplus \KO^H_{i+2}(\pt) \right) \tens \Q \rightarrow \KU^H_i(\pt) \tens \Q
     \label{eq:KOtoKUcoeff}
  \end{equation}
  is an isomorphism for every finite group $H$ and every $i \in \Z$.
  The map  $\left( \KO^\bullet_i \oplus \KO^\bullet_{i+2} \right) \tens \Q \to  \left( \KO^\bullet_{i+2} \oplus \KO^\bullet_{i+4} \right) \tens \Q$, $x \oplus y \mapsto y \oplus (\alpha/2) \cdot x$ defines a $2$-periodicity on the left-hand side which corresponds to Bott periodicity after applying $\KOtoKU = \cxfy + \xi^{-1} \cxfy$.
  Hence it suffices to check \labelcref{eq:KOtoKUcoeff} for $i \in \{0,1\}$.
  For $i = 1$, both sides of \labelcref{eq:KOtoKUcoeff} are zero by \labelcref{eq:equKO,eq:equKU}.
  It remains to check $i=0$.
  In this case $\KOtoKU$ correponds to the map $(\RO(H) \oplus \RU(H)/(1+\tau)) \tens \Q \to \RU(H) \tens \Q$, $x \oplus [y] \mapsto \cxfy(x) + y - \tau(y)$.
  This is an isomorphism because the map $\RU(H) \tens \Q \to (\RO(H) \oplus \RU(H)/(1+\tau)) \tens \Q$, $z \mapsto \frac{1}{2} (\rlfy(z) \oplus [z])$ is an explicit inverse.
\end{proof}
\begin{proof}[Proof of \cref{prop:realEquChern}]
  Start with the (non-equivariant) Pontryagin character isomorphism
  \begin{equation}
    \ph = \ch \circ \cxfy \colon \KO_p(X) \tens \Q \xrightarrow{\iso} \bigoplus_{k \in \Z}\HZ_{p + 4k}(X; \Q). \label{eq:phNonEqu}
  \end{equation}
It is, by definition, the Chern character applied after complexification.
  In particular, the proposition holds if $\Gamma$ is torsion-free.

  Next we deal with the case where $\Gamma = H = \Z/n\Z$ is a finite cyclic group.
  Then the Chern character yields an isomorphism $\chub_H \colon \RU(H) = \KU_0^H(\pt) \to \HZ_{0}(H; \Fin H)$ as all the other homology groups vanish.
  We have $\HZ_{0}(H; \Fin H) = \Fin H = \C^H$ and $\chub_H$ associates to a representation $[\rho] \in \RU(H)$ its character $\chi_\rho$.
  As the character of any real representation is symmetric and the character of an element in the image of $(1 - \tau) \colon \RU(H) \to \RU(H)$ is antisymmetric, we conclude that
  \begin{equation}
    \chub_\Gamma(\cxfy(\KO_{2q}^H(\pt))) = \HZ_{0}(H; \Fin^q H).
    \label{eq:phFinite}
  \end{equation}

  Next we let $G$ be some group and consider $\Gamma := G \times \Z/ n \Z$ and $y \times z \in \KO_{p+2q}^\Gamma(\Eub \Gamma)$ with $y \in \KO_p(\B G) \iso \KO_p^G(\E G)$ and $z \in \KO_{2q}^{\Z / n \Z}(\pt)$.
  There is a natural isomorphism $\KU_p^{G \times \Z/n\Z}(\Eub G) \iso \KU^G_p(\Eub G) \tens \RU(\Z/n\Z) \iso \KU^G_p(\Eub G) \tens \KU_0^{\Z/n\Z}(\pt)$ and a commutative diagram
  \begin{equation*}
     \begin{tikzcd}
        \KU_p(\B G) \tens \KU^{\Z/n\Z}_0(\pt) \tens \C \rar{\times} \dar{\ch \tens \chub_{\Z/n\Z}}
            & \KU_p^{G \times \Z/n\Z}(\Eub G) \dar{\chub_\Gamma} \tens \C \\
        \bigoplus_{k \in \Z}\HZ_{p+2k}(G; \C) \tens \HZ_{0}(\Z/n\Z; \Fin \Z/n\Z)  \rar{\times} & \bigoplus_{k \in \Z}\HZ_{p+2k}(\Gamma; \Fin \Gamma).
     \end{tikzcd}
  \end{equation*}
In view of \labelcref{eq:KOtoKU,eq:phNonEqu,eq:phFinite} this diagram restricts to
  \begin{equation*}
     \begin{tikzcd}
        \KO_p(\B G) \tens \KO^{\Z/n\Z}_{2q}(\pt) \tens \C \rar{\times} \dar{\ph \tens \chub_{\Z/n\Z} \circ \cxfy}
            & \KO_{p+2q}^{G \times \Z/n\Z}(\Eub G) \dar{\chub_\Gamma \circ \cxfy} \tens \C \\
        \bigoplus_{k \in \Z}\HZ_{p+4k}(G; \C) \tens \HZ_{0}(\Z/n\Z; \Fin^q \Z/n\Z)  \rar{\times} & \bigoplus_{k \in \Z}\HZ_{p+4k}(\Gamma; \Fin^q \Gamma).
     \end{tikzcd}
  \end{equation*}
 We conclude
 \begin{equation}
   \chub_\Gamma(\cxfy(y \times z)) \in \bigoplus_{k \in \Z}\HZ_{p + 4k}(\Gamma; \Fin^q \Gamma).\label{eq:phProduct}
   \end{equation}

 Now let $\Gamma$ be general.
 The equivariant $\KU$-homology $\KU_p^\Gamma(\Eub \Gamma)$ is generated by elements of the form $\varphi_\ast(y \times z)$ with $G \subseteq \Gamma$ some subgroup, $y \in \KU_p(\B G)$, $z \in \KU_0^{\Z/n\Z}(\pt)$ and $\varphi \colon G \times \Z/n\Z \to \Gamma$ some group homomorphism.
 This follows from~\cite[Theorem~1.3 and Section~7]{matthey:delocChern}.
 Using \labelcref{eq:KOtoKU}, this implies that $\KO_i^\Gamma(\Eub \Gamma)$ is generated by elements of the form $\varphi_\ast(y \times z)$ with $y \in \KO_p(\B G)$, $z \in \KO_{2q}^{\Z/n\Z}(\pt)$ such that $i = p + 2q$.
 Thus \labelcref{eq:phProduct} implies
 \begin{equation}
   (\chub_\Gamma \circ \cxfy)(\KO_p^\Gamma(\Eub \Gamma)) \subseteq \bigoplus_{k \in \Z} \HZ_{p+4k}(\Gamma; \Fin^0\Gamma) \oplus \HZ_{p + 2 + 4k}(\Gamma; \Fin^1 \Gamma) \label{eq:phGeneral}
   \end{equation}
   Now the proposition follows by combining \labelcref{eq:KOtoKU,eq:phGeneral} and using the fact that $\chub_\Gamma$ is an isomorphism.
\end{proof}

\section{Matthey's maps}\label{sec:mattheysMaps}
In this section, we exhibit the real versions of Matthey's maps from \cref{prop:mattheyKO} more explicitly.
We start with a brief summary of the material in~\cite{matthey:delocChern} that leads to \citeauthor{matthey:delocChern}'s definition of
\begin{equation*}
  \betatC_p \colon \HZ_p(\Gamma; \Fin(\Gamma)) \to \KU_{p}^\Gamma(\Eub{\Gamma}) \tens \C, \qquad p \in \{0,1,2\}.
\end{equation*}
Let $G^{(0)} := 1$, $G^{(1)} := \Z$ and $G^{(2),g} := \Gamma_g := \pi_1(\Sigma_g)$, where $\Sigma_g$ is the oriented surface of genus $g \geq 1$.
To simplify the notation, we let $G^{(2)}$ stand for $G^{(2),g}$ for some $g$.
Moreover, let $G_n^{(p)} := G^{(p)} \times \Z / n \Z$.

There is an isomorphism $\HZ_p(\Gamma; \Fin \Gamma) \iso \bigoplus_{C} \HZ_p(\B Z_C; \C)$.
Here the direct sum runs over all conjugacy classes of finite order elements and $Z_C$ denotes the centralizer of some element in the conjugacy class $C$.
An element $x \in \HZ_p(\Gamma; \Fin \Gamma)$ is called \emph{homogeneous} if it is contained in one of the direct summands.
For each $p \in \{0,1,2\}$ and $n \geq 0$, there is a certain fundamental class $[G_n^{(p)}] \in \HZ_p(G_n^{(p)}; \Fin G_n^{(p)})$.
These have the property that for $p \in \{0,1,2\}$ any homogeneous element $x \in \HZ_p(\Gamma; \Fin \Gamma)$ can be written as $x = \phi_\ast [G_n^{(p)}]$ for some $G_n^{(p)}$ and some group homomorphism $\phi \colon G_n^{(p)} \to \Gamma$.

Moreover, there is a certain $\KU$-homological fundamental class $[G_n^{(p)}]_{\KU} \in \KU_p^{G_n^{(p)}}(\Eub{G_n^{(p)}}) \tens \C$.
Setting $\phi_\ast [G_n^{(p)}] \mapsto \phi_\ast [G_n^{(p)}]_{\KU}$ yields a well-defined map $\HZ_p(\Gamma; \Fin(\Gamma)) \to \KU_{p}^\Gamma(\Eub{\Gamma}) \tens \C$ that is right-inverse to the equivariant Chern character.
This is Matthey's definition of $\betatC_p$.

To describe how this map decomposes in the real case, we need to recall the definition of the $\KU$-homological fundamental class $[G_n^{(p)}]_{\KU}$.
We start with the fundamental class $[G^{(p)}]_\KU \in \KU_p(\Eub{G^{(p)}})$ which are defined as $[G^{(0)}]_\KU := 1 \in \KU_0(\Eub{G^{(0)}}) = \KU_0(\pt)$, $[G^{(1)}]_\KU := [\sphere^1]_\KU \in \KU_1(\sphere^1) \iso \KU_1^\Z(\Eub \Z)$ and $[G^{(2)}]_\KU := [\Sigma_g]_\KU \in \KU_2(\Sigma_g) \iso \KU_2^{\Gamma_g}(\Eub \Gamma_g)$.
That is, $[G^{(p)}]_\KU$ is the $\KU$-homological fundamental class of the point, the circle or an oriented surface of positive genus.
Observe that we may take $\Eub{G^{(p)}_n} = \Eub{G^{(p)}}$ by letting $\Z / n \Z$ act trivially.
Then we set
\begin{equation*}
  [G_n^{(p)}]_\KU := \sum_{l=0}^{n-1} ([G^{(p)}]_\KU \times [\omega_n^l] ) \tens \omega^{-l}_n \in \KU_p^{G_n^{(p)}}(\Eub{G_n^{(p)}}) \tens \C.
\end{equation*}
In this formula $\omega_n := \eu^{2 \pi \iu / n}$ denotes the primitive $n$-th root of unity and $[\omega_n^l] \in \RU(\Z/n\Z) = \KU_0^{\Z/n\Z}(\pt)$ the corresponding representation.

To obtain the real counterparts to this, we first observe how $[\omega_n^l] \in \KU_0^{\Z/n\Z}(\pt)$ decomposes under the isomorphism from \cref{prop:KOtoKU} and \labelcref{eq:equKO,eq:equKU}.
Indeed,
\begin{equation*}
  \KU_0^{\Z/n\Z}(\pt) \tens \Q \iso (\KO_0^{\Z/n\Z}(\pt) \oplus \KO_2^{\Z/n\Z}(\pt)) \tens \Q \iso
  (\RO(\Z/n\Z) \oplus \RU(\Z / n \Z) / (1 + \tau) )\tens \Q.
\end{equation*}
Given $x \in \RU(Z/n\Z) \tens \Q$, we will write $\Re x := \rlfy(x) / 2 \in \RO(\Z/n\Z) \tens \Q$ and $\Im x \in \RU(\Z/n\Z) / (1 + \tau) = \KO_2^{\Z/n\Z}(\pt)$ for the class represented by $x / 2$.
Then we have $x = \cxfy(\Re x + \Im x)$.
We define
\begin{align}
  [G_n^{(p)}]_{\KO}^0 &:= \sum_{l=0}^{n-1} ([G^{(p)}]_\KO \times \Re [\omega_n^l] ) \tens \omega^{-l}_n \in \KO_p^{G_n^{(p)}}(\Eub{G_n^{(p)}}) \tens \C, \nonumber \\
  [G_n^{(p)}]_{\KO}^1 &:= \sum_{l=0}^{n-1} ([G^{(p)}]_\KO \times \Im [\omega_n^l] ) \tens \omega^{-l}_n \in \KO_{p+2}^{G_n^{(p)}}(\Eub{G_n^{(p)}}) \tens \C.
  \label{eq:equFundClass}
\end{align}
Here $[G^{(p)}]_\KO$ denotes the $\KO$-fundamental class of the point, the circle or a surface, respectively.
We find that $[G_n^{(p)}]_\KU = [G_n^{(p)}]_{\KO}^0 \oplus [G_n^{(p)}]_{\KO}^1$ under the isomorphism from \cref{prop:KOtoKU}.
The homological fundamental class also decomposes as $[G_n^{(p)}] = [G_n^{(p)}]^0 \oplus [G_n^{(p)}]^1$ according to $\HZ_p(G_n^{(p)}; \Fin G_n^{(p)}) = \HZ_p(G_n^{(p)}; \Fin^0 G_n^{(p)}) \oplus \HZ_p(G_n^{(p)}; \Fin^1 G_n^{(p)})$.

From this discussion we deduce:

\begin{prop}\label{prop:mattheysMaps}
  The real versions of Matthey's maps from \cref{prop:mattheyKO} are given by
  \begin{equation*}
    \betatC_{p,q} \colon \HZ_p(\Gamma; \Fin^q(\Gamma)) \to \KO_{p+2q}^\Gamma(\Eub{\Gamma}) \tens \C, \quad \phi_\ast [G_n^{(p)}]^q \mapsto \phi_\ast [G_n^{(p)}]_{\KO}^q
  \end{equation*}
\end{prop}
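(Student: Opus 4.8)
The plan is to read the formula off directly from the constructions recalled in this section: \citeauthor{matthey:delocChern}'s definition of $\betatC_p$, the restriction statement of \cref{prop:mattheyKO}, and the decompositions $[G_n^{(p)}]_\KU = [G_n^{(p)}]_{\KO}^0 \oplus [G_n^{(p)}]_{\KO}^1$ and $[G_n^{(p)}] = [G_n^{(p)}]^0 \oplus [G_n^{(p)}]^1$ established above. First I would check that the classes $\phi_\ast[G_n^{(p)}]^q$ span $\HZ_p(\Gamma; \Fin^q \Gamma)$, so that the formula actually determines $\betatC_{p,q}$. By the discussion above, the homogeneous elements span $\HZ_p(\Gamma; \Fin \Gamma)$ and each of them is of the form $\phi_\ast[G_n^{(p)}]$. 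Any group homomorphism $\phi \colon G_n^{(p)} \to \Gamma$ carries finite order elements to finite order elements and commutes with inversion, so the induced map on coefficient modules preserves the decomposition $\Fin(-) = \Fin^0(-) \oplus \Fin^1(-)$, and hence $\phi_\ast$ preserves the corresponding decomposition in group homology. Therefore the $\Fin^q$-component of $\phi_\ast[G_n^{(p)}]$ is exactly $\phi_\ast[G_n^{(p)}]^q$, and these classes span $\HZ_p(\Gamma; \Fin^q \Gamma)$.

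Next I would evaluate $\betatC_p$ on such a class. By construction $\betatC_p(\phi_\ast[G_n^{(p)}]) = \phi_\ast[G_n^{(p)}]_\KU$. Since $\KOtoKU$ is an isomorphism of proper equivariant homology theories (\cref{prop:KOtoKU}), it is in particular natural with respect to $\phi$, so the decomposition of $[G_n^{(p)}]_\KU$ recalled above transports to
\[
  \phi_\ast[G_n^{(p)}]_\KU = \phi_\ast[G_n^{(p)}]_{\KO}^0 \oplus \phi_\ast[G_n^{(p)}]_{\KO}^1 \in \bigl( \KO_p^\Gamma(\Eub\Gamma) \oplus \KO_{p+2}^\Gamma(\Eub\Gamma) \bigr) \tens \C .
\]
On the other hand $\phi_\ast[G_n^{(p)}] = \phi_\ast[G_n^{(p)}]^0 + \phi_\ast[G_n^{(p)}]^1$ with the two summands in $\HZ_p(\Gamma; \Fin^0\Gamma)$ and $\HZ_p(\Gamma; \Fin^1\Gamma)$. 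By \cref{prop:mattheyKO} the map $\betatC_p$ sends $\HZ_p(\Gamma; \Fin^q\Gamma)$ into $\KO_{p+2q}^\Gamma(\Eub\Gamma) \tens \C$; applying $\betatC_p$ to the last splitting, using linearity, and comparing the two summands of $\KU_p^\Gamma(\Eub\Gamma) \tens \C$ forces $\betatC_{p,q}(\phi_\ast[G_n^{(p)}]^q) = \phi_\ast[G_n^{(p)}]_{\KO}^q$ for $q \in \{0,1\}$. Combined with the spanning statement of the previous paragraph, this identifies $\betatC_{p,q}$ and proves the proposition.

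The step that needs genuine care is the compatibility of the two relevant splittings with the induced maps $\phi_\ast$: that $\phi_\ast$ commutes with the $\KOtoKU$-splitting of equivariant K-homology, and that it commutes with the $\Fin^0 \oplus \Fin^1$ splitting of the coefficient modules. Both are naturality statements --- the first because $\KOtoKU$ is a morphism of equivariant homology theories and hence natural in the group, the second because $\phi$ intertwines the inversion involutions on finite order elements --- and they are precisely what makes the decomposition of the fundamental classes transport along $\phi$. Once they are made precise the proposition follows formally, with no further geometric input beyond what the preceding sections supply.
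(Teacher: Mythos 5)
Your argument is correct and matches the paper's intended reasoning: the paper states \cref{prop:mattheysMaps} as a direct consequence (``From this discussion we deduce'') of the two fundamental-class decompositions recalled just above it, and you simply make explicit the naturality of $\KOtoKU$ and of the $\Fin^0 \oplus \Fin^1$ splitting under $\phi_\ast$, together with the spanning of $\HZ_p(\Gamma;\Fin^q\Gamma)$ by the components of homogeneous classes, that this deduction implicitly relies on.
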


\begin{rem}
  The element $[G_n^{(p)}]_{\KO}^0$ can be rewritten as
  \begin{equation*}
    [G_n^{(p)}]_{\KO}^0 = \sum_{l=0}^{\left\lfloor \frac{n}{2} \right\rfloor}
    ([G^{(p)}]_\KO \times \Re [\omega_n^l] ) \tens 2 \cos(2 \pi l /n ).
  \end{equation*}
    A similar equation involving the sine function holds for $[G_n^{(p)}]_{\KO}^1$.
  Thus it would be possible to restrict to real coefficients.
  However, we shall not use this any further, and continue to keep complex coefficients everywhere.
\end{rem}

\section{Secondary index classes of psc metrics for finite groups}\label{sec:finiteGroups}
The proposition below is essentially due to \citeauthor{BG95Eta}~\cite{BG95Eta}, albeit formulated in a slightly different way.
In the proof, we briefly explain for the convenience of the reader how its statement can be deduced from the result in the literature:
\begin{prop}\label{finiteRationalSurjRho}
  Let $H$ be a finite group and $n \geq 6$.
  Then the $\rho$-invariant $\rho \colon \StolzPos{n-1}{\B H} \to \SG_{n-1}^\R(H)$ is rationally surjective.
\end{prop}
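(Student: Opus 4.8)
The plan is to combine the rational computation of the analytic structure group, read off from the analytic surgery exact sequence, with the rank estimate of \citeauthor{BG95Eta}~\cite{BG95Eta}, using that relative eta invariants factor through the higher $\rho$-invariant. First I would dispose of the case where $n$ is odd. Then $\KO_n(\CstarRed H) \tens \Q \iso \KO_n^H(\pt) \tens \Q$ (Baum--Connes is an isomorphism for the finite group $H$) vanishes by~\labelcref{eq:equKO}, while $\KO_{n-1}(\B H) \tens \Q \iso \KO_{n-1}(\pt) \tens \Q$ because the reduced rational homology of $\B H$ vanishes, and under this identification $\nu$ is induced by the trivial representation and hence rationally injective. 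The surgery exact sequence then forces $\SG_{n-1}^\R(H) \tens \Q = 0$, so there is nothing to prove.

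For $n$ even I would again use the exact sequence. Since $n - 1$ is odd, $\KO_{n-1}(\B H) \tens \Q \iso \KO_{n-1}(\pt) \tens \Q = 0$, so $\partial \colon \KO_n(\CstarRed H) \tens \Q \to \SG_{n-1}^\R(H) \tens \Q$ is surjective with kernel the image of $\nu \colon \KO_n(\B H) \tens \Q \to \KO_n(\CstarRed H) \tens \Q$. Writing $n = 4k + 2q$ with $k \geq 1$ and $q \in \{0,1\}$ (the unique such decomposition, as $n \geq 6$), I would identify this cokernel using Baum--Connes, \labelcref{eq:equKO}, and the representation-ring identities recalled above. Concretely: $\KO_n(\CstarRed H) \tens \Q$ is $\RO(H) \tens \Q$ if $q = 0$ and $\RU(H)/(1+\tau) \tens \Q$ if $q = 1$; the image of $\nu$ is the line spanned by the trivial representation when $q = 0$ (coming from $\KO_n(\pt) \tens \Q \iso \Q$) and is zero when $q = 1$ (since then $\KO_n(\pt) \tens \Q = 0$); and complexification identifies $\RO(H) \tens \Q$ with the $\tau$-symmetric part $\RU(H)^\tau \tens \Q$ compatibly with virtual dimension, and $\RU(H)/(1+\tau) \tens \Q$ with the $\tau$-antisymmetric part $\RU(H)^{-\tau} \tens \Q$. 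As $\RU_0^0(H) = \RU(H)^\tau \cap \ker(\dim)$ and $\RU_0^1(H) = \RU(H)^{-\tau}$ (the virtual dimension vanishing automatically for antisymmetric characters), this yields $\dim_\Q \SG_{n-1}^\R(H) \tens \Q = \operatorname{rank} \RU_0^q(H)$.

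It then remains to feed in \citeauthor{BG95Eta}'s theorem. They establish the lower bound $\operatorname{rank} \StolzPos{n-1}{\B H} \geq \operatorname{rank} \RU_0^q(H)$ by means of relative eta invariants $\tilde\eta_\sigma \colon \StolzPos{n-1}{\B H} \to \C$ indexed by $\sigma \in \RU_0^q(H)$; what their argument actually shows is that the induced homomorphism $\StolzPos{n-1}{\B H} \tens \Q \to \bigoplus_\sigma \C$ has image of $\Q$-dimension at least $\operatorname{rank} \RU_0^q(H)$. Each $\tilde\eta_\sigma$ factors as $\tau_\sigma \circ \rho$ for a trace homomorphism $\tau_\sigma \colon \SG_{n-1}^\R(H) \to \C$ (see~\cite{HR10Eta}), so this homomorphism factors through $\rho \tens \Q$. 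Hence $\dim_\Q \operatorname{im}(\rho \tens \Q) \geq \operatorname{rank} \RU_0^q(H) = \dim_\Q \SG_{n-1}^\R(H) \tens \Q$, and therefore $\rho \tens \Q$ is surjective.

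The main obstacle is not any single deep input but the bookkeeping of the second step: identifying $\SG_{n-1}^\R(H) \tens \Q$ as precisely the \enquote{reduced} part of $\KO_n(\CstarRed H) \tens \Q$ surviving the assembly map, and matching its dimension with $\operatorname{rank} \RU_0^q(H)$ via the real--complex representation-ring comparison. A secondary point, which I would treat as standard and cite rather than reprove, is the precise assertion that relative eta invariants factor through the higher $\rho$-invariant in the real $\KO$-theoretic setting.
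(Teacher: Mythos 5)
Your proposal is correct and follows essentially the same approach as the paper's proof: it rests on rationally identifying $\SG_{n-1}^\R(H)\tens\Q$ with a copy of $\RU_0^q(H)$ using Baum--Connes and the Higson--Roe exact sequence, on the factoring of the Botvinnik--Gilkey relative eta invariants through $\rho$ via trace functionals (citing \cite{HR10Eta}), and on the rank estimate of \cite{BG95Eta}. The only real difference is presentational: where you count dimensions directly in the real exact sequence to get $\dim_\Q\SG_{n-1}^\R(H)\tens\Q = \operatorname{rank}\RU_0^q(H)$, the paper passes through the complex structure group $\SG_1^\C(H)$, exhibits a non-degenerate pairing $\RU_0^q(H)\tens\R\times\SG_{2q-1}^\R(H)\tens\R\to\R$, and lets the surjectivity in \cite{BG95Eta} force surjectivity of $\rho\tens\R$.
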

\begin{proof}
  We only need to consider $n$ to be even because the analytic structure group of a finite group rationally vanishes in odd degrees.
  Let $n = 4k + 2q$ with $k \geq 1$ and $q \in \{0,1\}$.
  Each (virtual) unitary representation $\pi$ of $H$ induces a trace functional $\tr_\pi \colon \KU_0(\C H) \to \Z$.
  If $\pi$ is of virtual dimension $0$, then $\tr_\pi$ extends to a functional $\eta_\pi \colon \SG_1^\C(H) \to \R$ on the complex version of the analytic structure group, see~\cite{HR10Eta}.
  By the construction of $\eta_\pi$, the composition $\eta_\pi \circ \rho \colon \StolzPos{4k+2q-1}{\B H} \to \R$ recovers the relative eta invariant used in~\cite{BG95Eta}.

  Since finite groups satisfy the Baum--Connes conjecture, the Higson--Roe sequence rationally becomes a short exact sequence:
  \begin{equation}
    0 \to \Q \to \KU_0(\C H) \tens \Q \to \SG^\C_1(H) \tens \Q \to 0. \label{eq:higsonRoeSeqFin}
  \end{equation}
  Let $\RU_0(H)$ denote the space of virtual unitary representation of dimension $0$.
    The pairing $\RU(H) \tens \Q \times \KU_0(\C H) \tens \Q \to \Q$, $(\pi, x) \mapsto \tr_\pi(x)$ is non-degenerate.
  We conclude from this and \labelcref{eq:higsonRoeSeqFin} that the pairing
  \begin{equation}
    \RU_0(H) \tens \R \times \SG_1^\C(H) \tens \R \to \R, \quad (\pi, x) \mapsto \eta_\pi(x)\label{eq:structPairing}
  \end{equation}
  is also non-degenerate.
The complex analytic structure group admits a decomposition $\SG^\C_{1}(H) \tens \Q \iso (\SG^\R_{1}(H) \oplus \SG^\R_{3}(H)) \tens \Q$ analogous to \cref{prop:KOtoKU}.
Applying this to \labelcref{eq:structPairing} yields a non-degenerate pairing
\begin{equation}
  \RU_0^q(H) \tens \R \times \SG_{2q-1}^\R(H) \tens \R \to \R, \quad (\pi, x) \mapsto \eta_\pi(x)\label{eq:structPairingReal}
\end{equation}
for each $q \in \{0,1\}$.

Finally, let $4k + 2q \geq 6$. Then \cite[Theorem~2.1]{BG95Eta} implies that the composition
\begin{equation*}
  \StolzPos{4k+2q-1}{\B H} \tens \R \xrightarrow{\rho \tens \R} \SG_{2q-1}^\R(H) \tens \R \xrightarrow{\bigoplus_i \eta_{\pi_i}} \R^{\dim \RU^q_0(H)}
\end{equation*}
is surjective, where $(\pi_i)_i$ is a basis of $\RU_0^q(H) \tens \R$.
Since the pairing \labelcref{eq:structPairingReal} is non-degenerate, the latter map in this composition is an isomorphism.
Thus $\rho \tens \R$ must be surjective as well.
\end{proof}

\begin{cor}
  \label{finiteRationalSurj}
    Let $H$ be a finite group and $n \geq 6$.
    Then the relative index map $\alpha \colon \StolzRel{n}{\B H}  \to \KO_{n}(\CstarRed H)$ is rationally surjective.
\end{cor}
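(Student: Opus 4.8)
The plan is to deduce the statement from \cref{finiteRationalSurjRho} by applying the four lemma to the rationalisation of the comparison diagram~\labelcref{eq:mappingPSCtoAnalysis} for $\Gamma = H$. First I would dispose of the case $n$ odd: since $H$ is finite, the Baum--Connes assembly map identifies $\KO_n(\CstarRed H) \iso \KO_n^H(\pt)$, which by~\labelcref{eq:equKO} vanishes rationally for $n$ odd, so there is nothing to prove. Hence from now on assume $n$ even (still $\geq 6$).

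Tensoring the first four columns of the top (Stolz) and bottom (Higson--Roe) rows of~\labelcref{eq:mappingPSCtoAnalysis} with $\Q$ — which preserves exactness — gives a commutative ladder with exact rows
\[
\begin{tikzcd}[column sep=small]
\SpinBordism{n}{\B H}\tens\Q \rar \dar{\beta\tens\Q} & \StolzRel{n}{\B H}\tens\Q \rar{\partial} \dar{\alpha\tens\Q} & \StolzPos{n-1}{\B H}\tens\Q \rar \dar{\rho\tens\Q} & \SpinBordism{n-1}{\B H}\tens\Q \dar{\beta\tens\Q} \\
\KO_n(\B H)\tens\Q \rar{\nu} & \KO_n(\CstarRed H)\tens\Q \rar{\partial} & \SG_{n-1}^\R(H)\tens\Q \rar & \KO_{n-1}(\B H)\tens\Q.
\end{tikzcd}
\]
By the four lemma, to conclude that the second vertical map $\alpha\tens\Q$ is surjective it suffices to know that the first vertical map is surjective, the third is surjective, and the fourth is injective.

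The third condition is precisely \cref{finiteRationalSurjRho}, and this is the only substantial input. The remaining two are elementary: since the rational homology of $\B H$ is concentrated in degree $0$, the Atiyah--Hirzebruch spectral sequence collapses rationally and provides natural isomorphisms $\SpinBordism{m}{\B H}\tens\Q \iso \SpinBordism{m}{\pt}\tens\Q$ and $\KO_m(\B H)\tens\Q \iso \KO_m(\pt)\tens\Q$ under which $\beta$ becomes the rationalised spin orientation of the point. As $\SpinBordism{\ast}{\pt}\tens\Q$ is concentrated in degrees divisible by $4$, the fact that $n$ is even forces $\SpinBordism{n-1}{\B H}\tens\Q = 0$, so the fourth vertical map is trivially injective; and the rationalised spin orientation $\SpinBordism{m}{\pt}\tens\Q \to \KO_m(\pt)\tens\Q$ is (split) surjective, giving surjectivity of the first vertical map. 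The four lemma then yields the claim.

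I do not anticipate a genuine obstacle: once \cref{finiteRationalSurjRho} is available, the argument is a diagram chase together with the standard rational computation of $\KO$-homology and spin bordism of $\B H$. The one point that deserves attention is that the hypothesis of the four lemma — injectivity of the last vertical map — genuinely fails for $n \equiv 1 \bmod 4$, since there $\SpinBordism{n-1}{\pt}\tens\Q$ has larger rank than $\KO_{n-1}(\pt)\tens\Q$; this is harmless because that case is already subsumed in the odd-$n$ reduction, where the target group $\KO_n(\CstarRed H)\tens\Q$ vanishes outright.
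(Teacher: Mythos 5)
Your proposal is correct and takes essentially the same route as the paper: reduce to $n$ even because $\KO_n(\CstarRed H)\tens\Q$ vanishes for $n$ odd, then apply the four lemma to the rationalised comparison diagram, with \cref{finiteRationalSurjRho} giving surjectivity of $\rho\tens\Q$, the rational surjectivity of the spin orientation (the paper cites products of Kummer surfaces) giving surjectivity of $\beta\tens\Q$, and the odd-degree rational vanishing of $\SpinBordism{n-1}{\B H}$ handling the fourth column.
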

\begin{proof}
  Again we only need to consider $n$ to be even and let $n = 4k + 2q \geq 6$.
 For a finite group $H$, the groups $\SpinBordism{l}{\B H}$ and $\KO_l(\B H)$ are torsion for $l \not \equiv 0 \mod 4$.
  Moreover, $\beta \tens \Q \colon \SpinBordism{4k}{\B H} \tens \Q \to \KO_{0}(\B H) \tens \Q$ is surjective because $\KO_{0}(\B H) \tens \Q \iso \KO_0(\pt) \tens \Q$ is generated by the class represented by any product of Kummer surfaces.
  By \cref{finiteRationalSurjRho}, the $\rho$-invariant $\rho \tens \Q \colon   \StolzPos{4k+2q-1}{\B H} \to \SG_{2q-1}^\R(H)$ is also surjective.
  Thus we have a diagram of exact sequences
  \begin{equation*}
    \begin{tikzcd}
      \SpinBordism{4k + 2q}{\B H} \tens \Q \dar["\beta \tens \Q", twoheadrightarrow] \rar &
        \StolzRel{4k+2q}{\B H} \tens \Q \rar \dar["\alpha \tens \Q"] &
        \StolzPos{4k+2q-1}{\B H} \tens \Q  \rar \dar["\rho \tens \Q", twoheadrightarrow] &
        0 \dar[hookrightarrow] \\
        \KO_{2q}(\B H) \tens \Q \rar & \KO_{2q}(\R H) \tens \Q \rar & \SG_{2q-1}^\R(H) \tens \Q \rar & 0,
    \end{tikzcd}
  \end{equation*}
  The \emph{Four Lemma} implies that $\alpha \tens \Q$ must be surjective as well.
\end{proof}

\section{Proof of main results}
Our main result, \Cref{thm:betaPSC}, follows immediately from \cref{prop:mattheysMaps} and the following lemma.

\begin{lem}\label{lem:PSClift}
  For each $n \in \N$, $p \in \{0,1,2\}$, $q \in \{0,1\}$ and $k \geq 1$ with $4k + 2q \geq 6$, there exists $[G_n^{(p)}]_\psc^q \in \StolzRel{p + 2q + 4k}{\B G^{(p)}_n} \tens \C$ with $\alphaAPS([G_n^{(p)}]_\psc^q) = \mu ([G_n^{(p)}]_\KO^q) \in \KO_{2q}(\CstarRed G_n^{(p)}) \tens \C$.
\end{lem}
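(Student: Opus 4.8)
The plan is to get the case $p = 0$ directly from the Botvinnik--Gilkey-type result recalled in \cref{sec:finiteGroups}, and then to bootstrap the cases $p = 1$ and $p = 2$ out of it by multiplying with a circle, respectively with an oriented surface $\Sigma_g$. The conceptual point is that, straight from the definitions \eqref{eq:equFundClass}, under the reordering $\B G_n^{(p)} = \B G^{(p)} \times \B(\Z/n\Z)$ the class $[G_n^{(p)}]_\KO^q$ is the external product $[G^{(p)}]_\KO \times [G_n^{(0)}]_\KO^q$, where $[G^{(p)}]_\KO \in \KO_p^{G^{(p)}}(\Eub{G^{(p)}})$ is the $\KO$-fundamental class of the point, the circle, or $\Sigma_g$; and the real assembly map $\mu$ is multiplicative with respect to such external products. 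So it should be enough to produce a geometric preimage of $\mu([G_n^{(0)}]_\KO^q)$ over $\B(\Z/n\Z)$ and then multiply it by a spin representative of $[G^{(p)}]_\KO$, reading off its relative index from a product formula.

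For $p = 0$: here $G_n^{(0)} = \Z/n\Z$ is finite and $4k + 2q \geq 6$ (this is exactly the constraint that $(k,q)\ne(1,0)$), so \cref{finiteRationalSurj} tells us that $\alpha\tens\Q$, hence $\alpha\tens\C$, is surjective onto $\KO_{4k+2q}(\CstarRed(\Z/n\Z))\tens\C$. Viewing $\mu([G_n^{(0)}]_\KO^q)$ in degree $4k+2q$ via rational $4$-periodicity of real $K$-theory, I would then simply choose any preimage $[G_n^{(0)}]_\psc^q \in \StolzRel{4k+2q}{\B(\Z/n\Z)}\tens\C$; this is the lemma for $p = 0$.

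For $p \in \{1,2\}$: let $N = \sphere^1$ if $p = 1$ and $N = \Sigma_g$ if $p = 2$. In both cases $N$ is a closed aspherical spin $p$-manifold whose classifying map $\psi\colon N \to \B G^{(p)}$ is a homotopy equivalence carrying the $\KO$-fundamental class of $N$ to $[G^{(p)}]_\KO$; consequently $(\nu\circ\beta)([N,\psi]) = \mu([G^{(p)}]_\KO)$ in the notation of diagram \eqref{eq:mappingPSCtoAnalysis} (using that $\nu = \mu$ for the torsion-free groups $\Z$ and $\pi_1\Sigma_g$). Fix a metric $h$ on $N$ and a representative $(W,\phi,g)$ of $[G_n^{(0)}]_\psc^q$. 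I would set
\[ [G_n^{(p)}]_\psc^q := \bigl[\, W\times N,\ \phi\times\psi,\ (\lambda^2 g)\times h \,\bigr] \in \StolzRel{p+2q+4k}{\B G_n^{(p)}}\tens\C, \]
the product metric being taken with $\lambda > 0$ small enough that $(\lambda^2 g)\times h$ is psc on $\partial W\times N$ --- possible because $\operatorname{scal}(g)$ is bounded below by a positive constant on the compact manifold $\partial W$, and not even needed when $p = 1$ since then $\operatorname{scal}(h) = 0$. This class does not depend on $\lambda$, as $g$ and $\lambda^2 g$ are isotopic, hence concordant, psc metrics; in particular $[W,\phi,\lambda^2 g] = [W,\phi,g]$. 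Then I would invoke the multiplicativity of the relative index map over the $\Omega^{\mathrm{spin}}_\ast$-module structure on Stolz' relative groups --- a product formula of the type in \cite[Corollary~6.10]{Z16Positive} and \cite[Corollary~5.24]{Zen17MappingSurgery}, namely $\alpha(x\cdot[N,\psi]) = \alpha(x)\cdot(\nu\circ\beta)([N,\psi])$ with the dot on the right the external product in $K$-theory of group $\Cstar$-algebras --- together with the base case and the decompositions above to obtain
\[ \alpha\bigl([G_n^{(p)}]_\psc^q\bigr) = \mu([G_n^{(0)}]_\KO^q)\cdot\mu([G^{(p)}]_\KO) = \mu\bigl([G_n^{(0)}]_\KO^q \times [G^{(p)}]_\KO\bigr) = \mu\bigl([G_n^{(p)}]_\KO^q\bigr). \]

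The step I expect to require the most care is the case $p = 2$: since $\Sigma_g$ carries no psc metric for $g \geq 2$, the product of a relative psc class with $\Sigma_g$ only makes sense after shrinking the boundary metric, and one must check that this rescaling affects neither the well-definedness of $[G_n^{(p)}]_\psc^q$ nor the validity of the product formula for $\alpha$ --- both of which come down to the standard fact that rescaling a psc metric by a positive constant yields a concordant (indeed isotopic) psc metric. Everything else is bookkeeping: the complex coefficients are inert scalars throughout, and one only needs to track the harmless reordering of the factors $\B G^{(p)}$ and $\B(\Z/n\Z)$, which matches the corresponding reordering of the external products.
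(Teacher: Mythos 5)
Your proposal is correct and follows essentially the same route as the paper: the base case over $\B(\Z/n\Z)$ comes from the rational surjectivity of $\alpha$ for finite groups (\cref{finiteRationalSurj}), and the cases $p=1,2$ are obtained by taking the external product with the spin bordism classes of $\sphere^1$ and $\Sigma_g$ via the $\SpinBordism{l}{X}\tens\StolzRel{m}{Y}\to\StolzRel{l+m}{X\times Y}$ pairing and the product formula for the relative index. The only cosmetic difference is that the paper picks preimages of the individual summands $\mu(x_{n,l}^q)$ and assembles the sum $\sum_l([G^{(p)}]_\Omega\times y_{n,l}^{q,k})\tens\omega_n^{-l}$, whereas you lift the whole class at once; your extra care about rescaling the boundary metric on $W\times\Sigma_g$ is exactly the detail the paper leaves implicit in the product map.
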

\begin{proof}
  Let
  \begin{equation*}
    x_{n,l}^q := \begin{cases}
                  \Re[\omega_n^l] & q = 0\\
                  \Im[\omega_n^l] & q = 1
                 \end{cases}
                 \quad \in \KO_{2q}^{\Z/n\Z}(\pt).
  \end{equation*}
  By \cref{finiteRationalSurj}, we can choose an element $y^{q,k}_{n,l} \in \StolzRel{2q + 4k}{\B \Z/n\Z} \tens \Q$ such that $\alphaAPS(y^{q,k}_{n,l}) = \mu(x^q_{n,l}) \in \KO_{2q}(\CstarRed (\Z/n\Z)) \tens \Q$.
  Now let $[G^{(0)}]_\Omega := [\pt] \in \SpinBordism{0}{\pt}$, $[G^{(1)}]_\Omega := [\sphere^1] \in \SpinBordism{1}{\B \Z}$ and $[G^{(2),g}]_\Omega := [\Sigma_g] \in \SpinBordism{2}{\B \Gamma_g} \tens \Q$.
  Note that for the latter we need to choose one from the $2^{2g}$ different spin structures on the oriented surface. 
  However, rationally the element $[\Sigma_g]$ is independent of this choice.
  Taking direct products yields a map $\SpinBordism{l}{X} \tens \StolzRel{m}{Y} \xrightarrow{\times} \StolzRel{l+m}{X \times Y}$.
  Using this, we let
  \begin{equation*}
    [G_n^{(p)}]_\psc^q := \sum_{l=0}^{n-1} \left( [G^{(p)}]_\Omega \times y_{n,l}^{q,k} \right) \tens \omega_n^{-l} \in \StolzRel{p + 2q + 4k}{\B G_n^{(p)}} \tens \C.\qedhere
  \end{equation*}
\end{proof}

\begin{proof}[Proof of \cref{thm:betaPSC}]
  We have the diagram
  \begin{equation*}
    \begin{tikzcd}
    &  \HZ_p(\Gamma; \Fin^q \Gamma ) \rar[hookrightarrow] \dar["\betaPSC_{p,q,k}", dotted] \ar[ddl, bend right, "\betat_{p,q}"]
        & \HZ_p(\Gamma; \Fin \Gamma) \ar[dd, "\betaaC_p"] \\
    &  \StolzRel{p + 2q + 4k}{\B \Gamma} \tens \C \dar["\alpha \tens \C"] & \\
    \KO_{p+2q}^\Gamma(\Eub \Gamma) \tens \C \rar{\mu \tens \C}&  \KO_{p+2q}(\CstarRed \Gamma)  \tens \C \rar["\cxfy \tens \C"]  & \KU_p(\CstarRed \Gamma) \tens \C,
    \end{tikzcd}
  \end{equation*}
  where by construction of $\betat_{p,q}$ the outer paths commute.
  To prove the existence of $\betaPSC_{p,q,k}$, it suffices to show that the image of $\mu \tens \C \circ \betat_{p,q}$ is contained in the image of $\alpha \tens \C$.
  \Cref{prop:mattheysMaps} implies that the image of $\betat_{p,q}$ is generated by elements of the form $\phi_\ast [G_n^{(p)}]_{\KO}^q$, where $[G_n^{(p)}]_{\KO}^q$ is defined in \labelcref{eq:equFundClass} and $\phi \colon G_n^{(p)} \to \Gamma$ is a group homomorphism.
  Thus it sufices to show that the elements $\mu (\phi_\ast [G_n^{(p)}]_{\KO}^q)$ are contained in the image of $\alpha \tens \Q$.
  Indeed, \cref{lem:PSClift} states that $\mu ([G_n^{(p)}]_{\KO}^q)$ admits a lift to $\StolzRel{p+2q+4k}{\B G_n^{(p)}}$.
  By functoriality, we conclude that $\mu( \phi_\ast [G_n^{(p)}]_{\KO}^q )$ admits a lift to $\StolzRel{p+2q+4k}{\B \Gamma}$.
\end{proof}

\begin{proof}[Proof of \cref{cor:RlowerBound}]
  If $\mu \tens \Q$ is injective, then $\betaaC_p = \mu \tens \C \circ \betatC_p$ maps $\HZ_p(\Gamma; \Fin \Gamma)$ injectively into $\KU_p(\CstarRed \Gamma) \tens \C$.
  Thus the diagram in \cref{thm:betaPSC} implies that for fixed $n$ the following map must be injective:
  \begin{equation}
    \sum_{p + 2q + 4k = n} \betaPSC_{p,q,k} \colon \bigoplus_{p+2q \in n + 4\Z} \HZ_p(\Gamma; \Fin^q \Gamma) \to \StolzRel{n}{\B \Gamma} \tens \C. \label{eq:betaPSCInj}
  \end{equation}
  Here $p$, $q$, $k$ range over $\{0,1,2\}$,  $\{0,1\}$, $\Z$, respectively.
  Unpacking this yields the table in the statement of \cref{cor:RlowerBound}.
\end{proof}

\begin{proof}[Proof of \cref{cor:PlowerBound}]
  The image in $\KU_p^\Gamma(\Eub \Gamma) \tens \C$ of the restriction of $\betat_p$ to $\HZ_p(\Gamma; \Fin_0^0 \Gamma \oplus \Fin^1 \Gamma)$ intersects trivially with the image of $\KU_p(\B \Gamma) \tens \C \hookrightarrow \KU_p^\Gamma(\Eub \Gamma) \tens \C$.
  This follows from the decomposition of the handicrafted Chern character based on the Shapiro isomorphism, see~\cite[Theorem~1.4]{matthey:delocChern}.
  Thus the injectivity of \labelcref{eq:betaPSCInj} together with a diagram chase involving \labelcref{eq:mappingPSCtoAnalysis} implies that the following map must be injective as well:
  \begin{equation*}
    \sum_{p + 2q + 4k = n} \partial \circ \betaPSC_{p,q,k} \colon \bigoplus_{p+2q \in n + 4\Z} \HZ_p(\Gamma; \Fin^q_0 \Gamma) \to \StolzPos{n-1}{\B \Gamma} \tens \C.
  \end{equation*}
  Here we use the convention $\Fin^1_0 \Gamma:= \Fin^1 \Gamma$.
\end{proof}

\begin{proof}[Proof of \cref{cor:surj}]
  If the rational homological dimension of $\Gamma$ is at most $2$, then the map
  \begin{equation*}
    \sum_{p + 2q \in n + 4 \Z} \betat_{p,q} \colon \bigoplus_{p + 2q \in n + 4 \Z} \HZ_p(\Gamma; \Fin \Gamma^q) \to \KO_{n}^\Gamma(\Eub \Gamma) \tens \C
  \end{equation*}
  is the inverse to the Chern character.
  In particular, it is surjective.
  If the rational Baum--Connes assembly map $\mu \tens \Q$ is also surjective, then this implies that the following is surjective too:
  \begin{equation*}
    \sum_{p + 2q \in n + 4 \Z} \betaaC_{p,q} = \mu \tens \C \circ \sum_{p + 2q \in n + 4 \Z} \betat_{p,q} \colon \bigoplus_{p + 2q \in n + 4 \Z} \HZ_p(\Gamma; \Fin \Gamma^q) \to \KO_{n}(\CstarRed \Gamma) \tens \C.
  \end{equation*}
  \Cref{thm:betaPSC} implies that for $n \geq 7$, the image of $\betaaC_{p,q}$ is contained in the image of $\alpha \tens \C$, which proves surjectivity of $\alpha \tens \C$ and thus of $\alpha \tens \Q$.

  If $\mu \tens \Q$ is injective, then $\nu \tens \Q$ is injective and by exactness the boundary map $\partial \tens \Q \colon \KO_{n}(\CstarRed \Gamma) \tens \Q \to \SG_{n-1}^\R(\Gamma) \tens \Q$ is surjective.
  Hence the surjectivity statement for $\rho \tens \Q$ if $\mu \tens \Q$ is an isomorphism follows from surjectivity of $\alpha \tens \Q$ and commutativity of the diagram \labelcref{eq:mappingPSCtoAnalysis}.
\end{proof}

\section*{Acknowledgements}
The first-named author thanks the support of PAPIIT-UNAM grants IA
100117 \enquote{Topolog\'ia Equivariante y Teor\'ia de \'indice}, as well as
CONACYT-SEP Foundational Research grant \enquote{Geometr\'ia No
conmutativa, Aplicaci\'on de Baum-Connes y Topolog\'ia Algebraica}.
The second-named author gratefully acknowledges the hospitality of the Centro de Ciencias Matemáticas at UNAM Morelia, where this project was initiated.
Both authors thank Johannes Ebert for valuable discussions.

\printbibliography
\vspace{2ex}
\end{document}